\newtheorem{thm}{Theorem}[section]
\newtheorem{lma}[thm]{Lemma}
\newtheorem{cor}[thm]{Corollary}
\newtheorem{prop}[thm]{Proposition}
\theoremstyle{definition}
\newtheorem{df}{Definition}
\theoremstyle{remark}
\newtheorem{preremark}{Remark}
\newtheorem{preex}{Example}
\numberwithin{equation}{section}
\begin{document}

\title{Tight holomorphic maps, a classification}

\date{\today}

\author{Oskar Hamlet}

\address{Department of Mathematics\\Chalmers University of Technology and the University of Gothenburg\\412 96 G\"OTEBORG\\SWEDEN}

\email{hamlet@chalmers.se}


\keywords{}

\begin{abstract}
We classify all tight holomorphic maps between Hermitian symmetric spaces.
\end{abstract}

\maketitle

\section{Introduction}
Let $\Gamma=\pi_1(\Sigma_g)$ be the fundamental group of an oriented compact surface $\Sigma_g$ of genus $g$ and $G=Isom(\mathcal{X})$ the isometry group of a Hermitian symmetric space of noncompact type $\mathcal{X}$. A subject of a lot of recent interest is the study of representations $\rho\colon \Gamma \rightarrow G$. In \cite{A9} Burger, Wienhard and Iozzi showed that maximal representations, that is representations with maximal Toledo invariant, have some striking geometrical properties. If $H$ is another Hermittian Lie group and $\eta\colon G \rightarrow H$ a positive homomorphism, then $\eta \circ \rho$ is maximal if and only if $\rho$ is maximal and $\eta$ is tight. Tight representations correspond to tight maps between the corresponding Hermitian symmetric spaces. With the exception of the irreducible representations $\rho_n\colon SU(1,1)\rightarrow Sp(2n)$ all known tight representations between Hermitian Lie groups correspond to tight holomorphic maps. In this paper we classify all tight holomorphic maps between Hermitian symmetric spaces.

\section{Description of the problem}\label{descr}
Let $\mathcal{X}_1=G_1/K_1, \mathcal{X}_2=G_2/K_2$ be Hermitian symmetric spaces of the non-compact type. Equip $\mathcal{X}_i$ with the unique $G_i$-invariant metric $g_i$ such that the minimal holomorphic sectional curvature is $-1$ on each irreducible factor of $\mathcal{X}_i$. Further let  $\omega_i$ be the associated K\"{a}hler forms defined by $\omega_i(X,Y):=g_i(J_iX,Y)$, where $J_i$ denotes the complex structure of $\mathcal{X}_i$ .
Let $f \colon \mathcal{X}_1\rightarrow \mathcal{X}_2$ be a totally geodesic map, i.e the image of geodesics in $\mathcal{X}_1$ are geodesics in $\mathcal{X}_2$. We have  
\begin{equation}\label{tightdef}
sup_{\Delta\subset\mathcal{X}_1}\int_\Delta{f^*\omega_2}\leq sup_{\Delta\subset\mathcal{X}_2}\int_\Delta{\omega_2}
\end{equation}
where the supremum is taken over all triangles with geodesic sides. The inequality is obvious since $f(\Delta)$  is a geodesic triangle in $\mathcal{X}_2$ for $\Delta$ a geodesic triangle in $\mathcal{X}_1$. If we have equality in (\ref{tightdef}) we say that the map is \emph{tight} \cite{A8}.\\
Our objective is to classify tight holomorphic maps up to equivalence. We say that two maps are equivalent if they are equal up to an action of an element of $G_2$. 

It will be useful to look at the problem from different perspectives. Let $\mathfrak{g}_i$ be the Lie algebra of $G_i$ and $\mathfrak{g}_i=\mathfrak{k}_i+\mathfrak{p}_i$ a Cartan decompostion. Let $0$ be a fixed basepoint of $\mathcal{X}_i$. Identifying $T_{0}\mathcal{X}_i$ with $\mathfrak{p}_i$, totally geodesic maps correspond to Lie algebra homomorphisms respecting the Cartan decomposition. As is well known there exists an element $Z_i$ in the center of $\mathfrak{k}_i$ such that the complex structure on $\mathfrak{p}_i$ is given by $ad(Z_i)$. A map $f$ is holomorphic if and only if the corresponding Lie algebra homomorphism $\rho$ satisfies
\begin{equation*}\tag{H1}
\rho \circ ad(Z_1)=ad(Z_2)\circ \rho. 
\end{equation*}
A stronger condition, that  
\begin{equation*}\tag{H2}
\rho(Z_1)=Z_2
\end{equation*}
will also play an important role. See \cite{A6} and \cite{A7} for a clarification of the roles of (H1) and (H2) in holomorphic representations. Our goal is reformulated as classifying all Lie algebra homomorphisms satisfying (H1) and corresponding to tight maps. We will later on give conditions in Lie algebraic terms for a homomorphism to correspond to a tight map.\\
Yet another point of view is that of the corresponding homomorphisms of the isometry groups and the maps induced by these on continuous bounded cohomology. In a familiar way $\omega_i$ defines a cohomology class $\kappa_{G_i}^b\in H^2_{cb}(G_i)$, defined by $\kappa_{G_i}^b(g_0,g_1,g_2):=\int_{\Delta(g_0 \cdot 0,g_1\cdot 0,g_2\cdot 0)}{\omega_i}$ where $\Delta(g_0\cdot 0,g_1\cdot 0,g_2\cdot 0)$ is the geodesic triangle with vertices $g_0\cdot 0,g_1\cdot 0,g_2\cdot 0$. Continuous bounded cohomology is equipped with a canonical seminorm, namely the supremum norm. For a continuous representation $\rho:G_1\rightarrow G_2$, $\rho^*$ is norm decreasing, i.e $||\rho^*\kappa_{G_2}^b||\leq ||\kappa_{G_2}^b||$. If there is equality we say that $ \rho$ is a tight homomorphism. Each $\rho$ corresponds to a totally geodesic map $f$ and we have, as the name suggests, that $\rho$ is tight if and only if $f$ is tight \cite{A8}.
We can thus view the problem on three different levels. The symmetric space, the isometry group and the corresponding Lie algebra.  We will switch frequently between these perspectives as they all have their advantages. While doing so we will abuse the concepts and notation a bit. We will talk about holomorphic representations when what we mean is that the corresponding totally geodesic map is holomorphic. We will also use the same letter to denote a homomorphism and the corresponding totally geodesic map.

We divide the classification into four parts. In the section 3 we reduce the problem to the classification of tight (H2)-representations and tight regular subalgebras. In the section 4 we give the tools needed to determine if a representation or subalgebra is tight. In section 5 we classify the tight regular subalgebras and in section 6 we classify the tight (H2)-representations.

\section{Reduction of the problem} \label{sectrescur}
The reduction consists of three main lemmas. We start by stating these and explaining their role in the reduction. We then recall some facts about bounded cohomology and prove Lemmas \ref{lma1} and \ref{lma2}. We then give the definition of regular subalgebras. We will not attempt to prove Lemma \ref{lma3} here but refer the reader to \cite{A6} instead.
\begin{lma}\label{lma1}
Let $\rho\colon \mathcal{Y}\rightarrow \mathcal{X}_1\times ... \times \mathcal{X}_n$ be a holomorphic embedding. Denote by $\pi_i$ the projection map onto the $\mathcal{X}_i$. Then $\rho$ is tight if and only if all the $\pi_i\circ\rho$ are tight. 
\end{lma}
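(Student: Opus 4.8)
The plan is to work on the level of bounded cohomology and exploit the fact that the Kähler class of a product Hermitian symmetric space decomposes as a sum over the factors. First I would recall that for $\mathcal{X}_1\times\cdots\times\mathcal{X}_n$ with the product metric, the Kähler form is $\omega=\sum_i p_i^*\omega_i$ where $p_i\colon \mathcal{X}_1\times\cdots\times\mathcal{X}_n\to\mathcal{X}_i$ is the projection; hence on bounded cohomology $\kappa^b_{G}=\sum_i p_i^*\kappa^b_{G_i}$, and $p_i$ is itself a tight holomorphic map (it is a Riemannian submersion that is holomorphic, and its restriction to any factor slice is an isometric totally geodesic holomorphic embedding, so no area is lost). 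The key normalization I would use is that for a Hermitian symmetric space $\mathcal{X}$ with the chosen metric, $\|\kappa^b_{G}\|=\sum_j r_{\mathcal{X}^{(j)}}\cdot\frac{\pi}{2}$ summed over irreducible factors, or more usefully the additivity statement $\|\kappa^b_{G_1\times\cdots\times G_n}\|=\sum_i\|\kappa^b_{G_i}\|$, which follows from the product structure (triangles can be chosen to be ``diagonal'' products of extremal triangles in each factor).

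Next I would compute $\rho^*\kappa^b_G$. Since $\kappa^b_G=\sum_i p_i^*\kappa^b_{G_i}$ and $p_i\circ\rho=\pi_i\circ\rho$ in the notation of the statement, we get
\begin{equation*}
\rho^*\kappa^b_G=\sum_{i=1}^n(\pi_i\circ\rho)^*\kappa^b_{G_i}.
\end{equation*}
Now the main point: each summand $(\pi_i\circ\rho)^*\kappa^b_{G_i}$ is, up to sign, a positive multiple of a fixed generator (each $\pi_i\circ\rho$ is a holomorphic — hence positive — totally geodesic map, so it pulls $\kappa^b_{G_i}$ back to a nonnegatively-weighted bounded class with no cancellation), and crucially these classes ``live in different factors'' so that the seminorm of the sum is the sum of the seminorms:
\begin{equation*}
\Big\|\sum_{i=1}^n(\pi_i\circ\rho)^*\kappa^b_{G_i}\Big\|=\sum_{i=1}^n\big\|(\pi_i\circ\rho)^*\kappa^b_{G_i}\big\|.
\end{equation*}
I would prove the upper bound $\leq$ by the triangle inequality for the seminorm, and the lower bound $\geq$ by evaluating the cocycle on configurations of the form $(g_0,g_1,g_2)$ with prescribed behavior in a single factor and trivial behavior in the others (i.e. using that the extremal triangles realizing each $\|(\pi_i\circ\rho)^*\kappa^b_{G_i}\|$ can be realized simultaneously inside the image, since $\mathcal X_1\times\cdots\times\mathcal X_n$ contains products of triangles). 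Combining, tightness of $\rho$ reads $\sum_i\|(\pi_i\circ\rho)^*\kappa^b_{G_i}\|=\sum_i\|\kappa^b_{G_i}\|$, while tightness of each $\pi_i\circ\rho$ reads $\|(\pi_i\circ\rho)^*\kappa^b_{G_i}\|=\|\kappa^b_{G_i}\|$. Since each term of the first sum is bounded above by the corresponding term of the second (norm non-increasing), the sums agree if and only if the terms agree individually, which is exactly the claim.

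The step I expect to be the genuine obstacle is the additivity of the seminorm, i.e. showing $\|\sum_i p_i^*\alpha_i\|=\sum_i\|p_i^*\alpha_i\|$ rather than just $\leq$; the upper bound is formal but the lower bound requires producing, for each $\varepsilon>0$, a simultaneous near-extremizing configuration, which relies on the product geometry of $\mathcal{X}_1\times\cdots\times\mathcal{X}_n$ and on the fact that $\kappa^b_{G_i}$ is represented by the (bounded, alternating) area cocycle whose supremum is attained on ``maximal'' ideal triangles. I would handle this by quoting the computation of $\|\kappa^b_{G_i}\|$ and the behavior of these area cocycles under products from \cite{A8} (and the general formalism of \cite{A9}), reducing the lemma to the elementary real-analysis statement that if $0\le a_i\le b_i$ and $\sum a_i=\sum b_i$ then $a_i=b_i$ for all $i$. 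The holomorphy (positivity) hypothesis enters precisely to guarantee there is no cancellation among the pulled-back classes, so that $(\pi_i\circ\rho)^*\kappa^b_{G_i}$ has seminorm equal to the Toledo-type number of $\pi_i\circ\rho$ with a definite sign.
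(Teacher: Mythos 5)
Your proposal follows essentially the same route as the paper: decompose $\kappa^b_G=\sum_i\pi_i^*\kappa^b_{G_i}$, pull back to get $\rho^*\kappa^b_G=\sum_i(\pi_i\circ\rho)^*\kappa^b_{G_i}$, establish $\bigl\|\sum_i(\pi_i\circ\rho)^*\kappa^b_{G_i}\bigr\|=\sum_i\bigl\|(\pi_i\circ\rho)^*\kappa^b_{G_i}\bigr\|$, and then compare term by term with the $\|\kappa^b_{G_i}\|$. The one place where your justification goes astray is the norm-additivity step. The classes $(\pi_i\circ\rho)^*\kappa^b_{G_i}$ all live in $H^2_{cb}(H)$, where $H$ is the isometry group of the \emph{source} $\mathcal{Y}$; they do not ``live in different factors'' (if $\mathcal{Y}$ is irreducible they are all nonnegative multiples of the single generator $\kappa^b_H$), and your proposed lower-bound argument --- realizing the extremal triangles of the several terms simultaneously as a product of triangles in $\mathcal{X}_1\times\cdots\times\mathcal{X}_n$ --- does not work as stated, because such a product configuration need not lie in the image of $\rho$, while the supremum computing the seminorm of a class on $H$ must be taken over configurations in $H$. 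What saves the step is exactly the positivity you also invoke in the same sentence: since each $\pi_i\circ\rho$ is holomorphic, $(\pi_i\circ\rho)^*\kappa^b_{G_i}=\sum_j\lambda_{ij}\kappa^b_{H_j}$ with all $\lambda_{ij}\ge 0$, where $H=H_1\times\cdots\times H_m$ is the decomposition of the source into simple factors, and the norm formula $\|\sum_j\mu_j\kappa^b_{H_j}\|=\sum_j|\mu_j|\,\|\kappa^b_{H_j}\|$ on $H^2_{cb}(H)$ --- whose proof does use simultaneous near-extremal configurations, but in the factors of $\mathcal{Y}$, not of the target --- then gives additivity for nonnegative combinations. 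With that correction your argument is the paper's argument; the concluding step (each term satisfies $\|(\pi_i\circ\rho)^*\kappa^b_{G_i}\|\le\|\kappa^b_{G_i}\|$, so the sums agree iff the terms agree) is fine.
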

Lemma \ref{lma1} allows us to reduce our classification to the case when the target space is irreducible.
\begin{lma}\label{lma2}
Let $\rho\colon \mathcal{X}\rightarrow \mathcal{Y}$ and $\eta\colon \mathcal{Y}\rightarrow \mathcal{Z}$ be totally geodesic holomorphic maps and further assume that $\eta$ is injective. Then $\eta\circ\rho$ is tight if and only if $\rho$ and $\eta$ are tight.
\end{lma}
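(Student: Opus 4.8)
The plan is to work on the level of the symmetric spaces, using only the comparison inequality~(\ref{tightdef}) and Lemma~\ref{lma1}, and to isolate the one place where injectivity of $\eta$ is indispensable (equivalently, one can phrase everything in terms of the norm of the bounded Kähler class, but the triangle picture is more self-contained). For a holomorphic totally geodesic map $f$ with target $\mathcal{W}$ write $I(f):=\sup_{\Delta}\int_{\Delta}f^{*}\omega_{\mathcal{W}}=\sup_{\Delta}\int_{f(\Delta)}\omega_{\mathcal{W}}$ and $I(\mathcal{W}):=\sup_{\Delta\subset\mathcal{W}}\int_{\Delta}\omega_{\mathcal{W}}$, the suprema over geodesic triangles, so that $f$ is tight precisely when $I(f)=I(\mathcal{W})$. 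The first step is the elementary chain $I(\eta\circ\rho)\le I(\eta)\le I(\mathcal{Z})$: the right inequality is~(\ref{tightdef}) for $\eta$, and the left one holds because, $\rho$ being totally geodesic, each $\rho(\Delta)$ is again a geodesic triangle in $\mathcal{Y}$, so $\int_{(\eta\circ\rho)(\Delta)}\omega_{\mathcal{Z}}=\int_{\eta(\rho(\Delta))}\omega_{\mathcal{Z}}\le I(\eta)$. Thus if $\eta\circ\rho$ is tight then $I(\mathcal{Z})=I(\eta\circ\rho)\le I(\eta)\le I(\mathcal{Z})$ forces $I(\eta)=I(\mathcal{Z})$ and $I(\eta\circ\rho)=I(\eta)$; so tightness of $\eta$ comes for free, and the real content is the assertion about $\rho$.

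Next I would pin down $\eta^{*}\omega_{\mathcal{Z}}$. Since $\eta$ is injective, the underlying Lie algebra homomorphism is injective on $\mathfrak{p}_{\mathcal{Y}}$ — a nonzero vector in its kernel would produce a geodesic of $\mathcal{Y}$ collapsed by $\eta$ to a point — so $\eta$ is an immersion and $\eta^{*}\omega_{\mathcal{Z}}$ is a \emph{nondegenerate} $G_{\mathcal{Y}}$-invariant Kähler form on $\mathcal{Y}$, the invariance because $\eta$ is equivariant and $\omega_{\mathcal{Z}}$ is $G_{\mathcal{Z}}$-invariant. Writing $\mathcal{Y}=\mathcal{Y}_{1}\times\dots\times\mathcal{Y}_{m}$ for the de Rham decomposition and $\omega_{\mathcal{Y}}=\omega_{1}+\dots+\omega_{m}$, invariance and irreducibility of the factors force $\eta^{*}\omega_{\mathcal{Z}}=\sum_{j}\lambda_{j}\,\omega_{j}$ with \emph{all} $\lambda_{j}>0$; this strict positivity is exactly what injectivity buys, and is what fails for a product projection $\mathbb{H}\times\mathbb{H}\to\mathbb{H}$, for which the lemma is false. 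With $S_{j}:=\sup_{\Delta\subset\mathcal{Y}_{j}}\int_{\Delta}\omega_{j}$, a product--triangle computation (let the three vertices of $\Delta\subset\mathcal{Y}$ have, in the $j$-th coordinate, the vertices of a near-optimal triangle of $\mathcal{Y}_{j}$, chosen independently in each factor) gives $I(\eta)=\sum_{j}\lambda_{j}S_{j}$ and $I(\mathcal{Y})=\sum_{j}S_{j}$.

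Now set $t_{j}:=I(\pi_{j}\circ\rho)$; since $(\pi_{j}\circ\rho)(\Delta)$ is a geodesic triangle in $\mathcal{Y}_{j}$ one has $t_{j}\le S_{j}$, and by Lemma~\ref{lma1} (together with $I(\mathcal{Y}_{j})=S_{j}$) tightness of $\rho$ is equivalent to $t_{j}=S_{j}$ for all $j$. If $\eta\circ\rho$ is tight, then, using $\int_{\Delta}(\eta\circ\rho)^{*}\omega_{\mathcal{Z}}=\sum_{j}\lambda_{j}\int_{\Delta}(\pi_{j}\circ\rho)^{*}\omega_{j}$ and the first step,
\[ \sum_{j}\lambda_{j}S_{j}=I(\eta)=I(\mathcal{Z})=I(\eta\circ\rho)\ \le\ \sum_{j}\lambda_{j}t_{j}\ \le\ \sum_{j}\lambda_{j}S_{j}, \]
so all inequalities are equalities, and since $\lambda_{j}>0$ and $t_{j}\le S_{j}$ this forces $t_{j}=S_{j}$ for every $j$, i.e.\ $\rho$ is tight (and $\eta$ is tight by the first step). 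Conversely, if $\rho$ and $\eta$ are tight, then $t_{j}=S_{j}$ for all $j$ and $I(\mathcal{Z})=I(\eta)=\sum_{j}\lambda_{j}S_{j}$; choosing geodesic triangles $\Delta_{n}\subset\mathcal{X}$ with $\int_{\Delta_{n}}\rho^{*}\omega_{\mathcal{Y}}=\sum_{j}\int_{\Delta_{n}}(\pi_{j}\circ\rho)^{*}\omega_{j}\to\sum_{j}S_{j}$ (possible since $\rho$ is tight and $I(\mathcal{Y})=\sum_{j}S_{j}$), each summand is $\le S_{j}$ and hence tends to $S_{j}$, so $\int_{\Delta_{n}}(\eta\circ\rho)^{*}\omega_{\mathcal{Z}}\to\sum_{j}\lambda_{j}S_{j}=I(\mathcal{Z})$; with $I(\eta\circ\rho)\le I(\mathcal{Z})$ this gives $I(\eta\circ\rho)=I(\mathcal{Z})$, so $\eta\circ\rho$ is tight.

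The comparison inequalities and the product--triangle identities are routine; the one genuinely structural step, and the one I expect to require the most care, is the identification $\eta^{*}\omega_{\mathcal{Z}}=\sum_{j}\lambda_{j}\omega_{j}$ with \emph{all} coefficients strictly positive. It is precisely this that upgrades the weighted equality $\sum_{j}\lambda_{j}t_{j}=\sum_{j}\lambda_{j}S_{j}$ to $t_{j}=S_{j}$ for each $j$; drop the injectivity hypothesis and some $\lambda_{j}$ may vanish, and the conclusion — indeed the lemma itself — fails.
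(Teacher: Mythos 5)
Your proof is correct and follows essentially the same route as the paper: your identification $\eta^{*}\omega_{\mathcal{Z}}=\sum_{j}\lambda_{j}\omega_{j}$ with all $\lambda_{j}>0$ is exactly the paper's Lemma~\ref{lma5} (strict positivity of injective holomorphic maps), and your weighted-sum argument over the irreducible factors of $\mathcal{Y}$ is the paper's computation $\|\rho^{*}\eta^{*}\kappa_{L}^{b}\|=\sum c_{i}\|\rho_{i}^{*}\kappa_{G_{i}}^{b}\|$ translated from bounded-cohomology norms into suprema of integrals over geodesic triangles. The only cosmetic difference is that the paper first reduces to $\mathcal{Z}$ irreducible via Lemma~\ref{lma1}, a step your formulation does not need.
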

Lemma \ref{lma2} quickly reduces our classification to injective maps. Let $\rho\colon\mathcal{X}\rightarrow\mathcal{X}^{'}$ be a non-injective holomorphic totally geodesic map. Let $\rho\colon \mathfrak{g}\rightarrow \mathfrak{g}^{'}$ be the corresponding lie algebra homomorphism. The kernel of $\rho$ is then an ideal $\mathfrak{g}_1$ of $\mathfrak{g}$ corresponding to a subsymmetric space. Since $\mathfrak{g}$ is semisimple we can write it as a direct sum $\mathfrak{g}=\mathfrak{g}_1\oplus\mathfrak{g}_2$ and factor our homomorphism $\rho$ as $\mathfrak{g}_1\oplus\mathfrak{g}_2\stackrel{\pi_2}{\rightarrow} \mathfrak{g}_2 \stackrel{\tilde{\rho}}{\longrightarrow} \mathfrak{g}^{'}$. As projections are tight, we get that $\rho$ is tight if and only if $\tilde{\rho}$ is tight.
\begin{lma}\label{lma3}
Let $\rho\colon \mathfrak{g}\rightarrow \mathfrak{g}^{'}$ be a representation of Hermitian Lie algebras respecting the Cartan decomposition and satisfying the condition (H1). Then there exist a regular subalgebra $\mathfrak{g}^{''}\subset\mathfrak{g}^{'}$ such that $\rho(\mathfrak{g})\subset\mathfrak{g}^{''}$ and $\rho\colon \mathfrak{g}\rightarrow \mathfrak{g}^{''}$ satisfies (H2).
Further, if $\mathfrak{g}$ is a sum of simple Hermitian Lie algebras $\mathfrak{g}_1\oplus...\oplus\mathfrak{g}_n$ we can choose  $\mathfrak{g}^{''}$ as a sum of Hermitian Lie algebras $\mathfrak{g}_1^{''}\oplus ... \oplus \mathfrak{g}_n^{''}$ such that the image of each $\mathfrak{g}_i$ is contained in $\mathfrak{g}^{''}_i$. The restricted maps $\rho|\colon \mathfrak{g}_i\rightarrow \mathfrak{g}_i^{''}$ all satisfy (H2).
\end{lma}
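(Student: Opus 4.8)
The plan is to produce $\mathfrak{g}''$ concretely, as the noncompact part of the centraliser of a single element of $\mathfrak{k}'$, and to obtain (H2) almost for free from (H1).

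First I would set $T:=\rho(Z_1)$ and $H:=Z_2-T$. Since $\rho$ respects the Cartan decomposition and $Z_1\in\mathfrak{k}_1$, both $T$ and $H$ lie in $\mathfrak{k}'$. The key observation is that $H$ centralises $\rho(\mathfrak{g})$: on $\rho(\mathfrak{k})$ one has $ad(Z_2)=0$, as $Z_2$ is central in $\mathfrak{k}'$, and $ad(T)\rho(U)=\rho(ad(Z_1)U)=0$ for $U\in\mathfrak{k}$, as $ad(Z_1)=0$ on $\mathfrak{k}$; on $\rho(\mathfrak{p})$ condition (H1) gives $ad(Z_2)\rho(X)=\rho(ad(Z_1)X)=ad(T)\rho(X)$ for $X\in\mathfrak{p}$; hence $ad(H)=ad(Z_2)-ad(T)$ vanishes on $\rho(\mathfrak{g})$, and in particular $[H,T]=[H,Z_2]=0$.

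Next I would choose a maximal torus $\mathfrak{t}'$ of $\mathfrak{k}'$ with $H,T\in\mathfrak{t}'$; since $\mathfrak{g}'$ is Hermitian, $\mathfrak{t}'$ is also a maximal torus of $\mathfrak{g}'$. Put $\mathfrak{m}:=\mathfrak{z}_{\mathfrak{g}'}(H)$, a regular reductive subalgebra, stable under the Cartan involution, containing $\rho(\mathfrak{g})$, $T$ and $Z_2$, with $\mathfrak{m}=(\mathfrak{m}\cap\mathfrak{k}')\oplus(\mathfrak{m}\cap\mathfrak{p}')$. On $\mathfrak{m}$ we have $ad(H)=0$, hence $ad(T)=ad(Z_2)$; thus $T$ is central in $\mathfrak{m}\cap\mathfrak{k}'$ and $(ad\,T)^2=-\mathrm{id}$ on $\mathfrak{m}\cap\mathfrak{p}'$. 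Now let $\mathfrak{g}''$ be the sum of the noncompact simple ideals of $[\mathfrak{m},\mathfrak{m}]$. It is a sum of root spaces together with a subspace of $\mathfrak{t}'$, hence regular in $\mathfrak{g}'$, and it is a Hermitian Lie algebra, each simple factor $\mathfrak{s}$ being of Hermitian type with $H$-element the projection of $T$ onto $\mathfrak{s}$. Since $\rho(\mathfrak{g})$ is semisimple with only noncompact factors, its projections onto the centre of $\mathfrak{m}$ and onto every compact ideal of $[\mathfrak{m},\mathfrak{m}]$ vanish, so $\rho(\mathfrak{g})\subset\mathfrak{g}''$; moreover $T=\rho(Z_1)\in\rho(\mathfrak{g})\subset\mathfrak{g}''$ is exactly the $H$-element of $\mathfrak{g}''$. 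Hence $\rho\colon\mathfrak{g}\to\mathfrak{g}''$ satisfies (H2), which proves the first assertion.

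For the refinement I would write $Z_1=Z_1^{(1)}+\dots+Z_1^{(n)}$ with $Z_1^{(i)}$ the $H$-element of $\mathfrak{g}_i$ and set $T_i:=\rho(Z_1^{(i)})$, so $T=\sum_iT_i$ with the $T_i$ pairwise commuting in $\mathfrak{k}'$. Applying the construction above to $\rho|_{\mathfrak{g}_i}$ with the element $H_i:=Z_2-T_i$ (which lies in $\mathfrak{k}'$ and centralises $\rho(\mathfrak{g}_i)$, the remaining $Z_1^{(j)}$ acting trivially on $\mathfrak{g}_i$) yields for each $i$ a subalgebra $\mathfrak{g}_i''$, the sum of the noncompact simple ideals of $[\mathfrak{z}_{\mathfrak{g}'}(H_i),\mathfrak{z}_{\mathfrak{g}'}(H_i)]$: it is regular in $\mathfrak{g}'$, contains $\rho(\mathfrak{g}_i)$, and has $H$-element $T_i$, so $\rho\colon\mathfrak{g}_i\to\mathfrak{g}_i''$ satisfies (H2). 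As the $H_i$ commute, all the $\mathfrak{g}_i''$ are normalised by one common maximal torus; so if they turn out to commute pairwise and to be independent, then $\mathfrak{g}_1''\oplus\dots\oplus\mathfrak{g}_n''$ is regular in $\mathfrak{g}'$, is Hermitian with $H$-element $\sum_iT_i=\rho(Z_1)$, and gives the desired decomposition.

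The hard part will be exactly this last step: showing that the $\mathfrak{g}_i''$ can be arranged to commute pairwise and to be independent. The formal properties above only place $\mathfrak{g}_i''$ inside $\mathfrak{z}_{\mathfrak{g}'}(H_i)$, which does not by itself separate the factors, and some care (possibly a conjugation, harmless since the classification is up to $G_2$) is needed. The real input is an analysis of the simultaneous eigenspace decomposition of the commuting operators $ad(T_1),\dots,ad(T_n)$ on $\bigoplus_i(\mathfrak{g}_i''\cap\mathfrak{p}')$: relative to the complex structure $ad(T)=\sum_iad(T_i)$ one must show that this decomposition respects the bracket and splits the total algebra into Hermitian ideals, each absorbing exactly one $\rho(\mathfrak{g}_i)$. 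This is the argument carried out in \cite{A6}, which is why the full proof is not reproduced here.
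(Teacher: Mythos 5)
The paper gives no proof of this lemma at all --- it explicitly defers to Ihara \cite{A6} --- so there is no argument of the paper's to match yours against; I can only judge the proposal on its own. Your first paragraph is correct and is essentially the classical Satake--Ihara device: $H=Z_2-\rho(Z_1)$ lies in $\mathfrak{k}'$, kills $\rho(\mathfrak{k})$ because $Z_2$ is central in $\mathfrak{k}'$ and $Z_1$ in $\mathfrak{k}$, and kills $\rho(\mathfrak{p})$ by (H1); cutting to the noncompact ideals of $[\mathfrak{z}_{\mathfrak{g}'}(H),\mathfrak{z}_{\mathfrak{g}'}(H)]$ then forces $ad(\rho(Z_1))=ad(Z_2)$ there, and since $\rho(Z_1)$ lies in $\mathfrak{g}''$ its component in each noncompact ideal is that ideal's $H$-element, giving (H2). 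The one point you should not wave at is regularity in the paper's $\Pi$-system sense: you need a simple system of $\{\alpha:\alpha(H)=0\}$ in which each component has at most one noncompact root. This holds, but only for a suitably adapted ordering (one in which $\alpha(-iZ_2)$ is the leading coefficient), and deserves a sentence rather than the phrase ``hence regular.''

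The second part has a genuine gap, which to your credit you name yourself. The centralizer $\mathfrak{z}_{\mathfrak{g}'}(H_i)$ with $H_i=Z_2-T_i$ is blind to the other factors: a noncompact root $\alpha$ of $\mathfrak{g}'$ with $\alpha(T_i)=\alpha(T_j)=\alpha(Z_2)$ for $i\neq j$ lies in both $\mathfrak{z}_{\mathfrak{g}'}(H_i)$ and $\mathfrak{z}_{\mathfrak{g}'}(H_j)$, and nothing in your formal manipulations excludes such roots, so independence and pairwise commutation of the $\mathfrak{g}_i''$ --- exactly what the second assertion is about --- is not established. The natural repair is to replace $\mathfrak{z}_{\mathfrak{g}'}(H_i)$ by $\mathfrak{z}_{\mathfrak{g}'}(H_i)\cap\bigcap_{j\neq i}\mathfrak{z}_{\mathfrak{g}'}(T_j)$, which still contains $\rho(\mathfrak{g}_i)$ because $ad(T_j)\rho(X)=\rho([Z_1^{(j)},X])=0$ for $X\in\mathfrak{g}_i$; but even then one must show that a noncompact root with $\alpha(T_i)=\alpha(Z_2)$, $\alpha(T_j)=0$ and one with $\beta(T_j)=\beta(Z_2)$, $\beta(T_i)=0$ cannot sum to a root, and that is a root-theoretic argument, not a formal one. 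This residue is precisely the content of Ihara's proof, so your proposal ends where the paper's citation begins: it genuinely proves the first assertion, which the paper omits, and leaves the second assertion in the same state as the paper does.
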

The inclusion map of regular subalgebras satisfy (H1) \cite{A6}. Lemma \ref{lma3} is one of the key reductions in classifying holomorphic maps between Hermitian symmetric spaces. It breaks down the classifying into two managable parts, classifying (H2)-representations and regular subalgebras. Together with the Lemma \ref{lma2} it allows us to break down our classification into the same parts, classifying tight (H2)-representations $\rho\colon \mathfrak{g}\rightarrow\mathfrak{g}^{''}$ and tight regular subalgebras $\mathfrak{g}^{''}\subset\mathfrak{g}^{'}$. 
The second part of Lemma \ref{lma3} allows us to reduce our classification to the case of $\mathfrak{g}$ simple.

We will now recall some facts about continuous bounded cohomology. We will restrict ourselves to cohomology in the second degree and to Hermitian Lie groups only. For a more thorough review of the theory we recommend \cite{A10}, and for connections to rigidity questions \cite{A11}.

Let $G$ be a Hermitian Lie group, i.e one which is the isometry group of a Hermitian symmetric space. Further let $G=G_1\times...\times G_n$ be a decomposition of $G$ into simple factors and $\mathcal{X}=\mathcal{X}_1\times... \times\mathcal{X}_n$ be the corresponding decomposition of the symmetric space into irreducible symmetric spaces. Let $\pi_i\colon G\rightarrow G_i$ be the projection maps and $\iota_i\colon G_i\rightarrow G$ be inclusion maps. We have
\begin{equation}\label{isom}
H_{cb}^2(G)\cong \prod{H^2_{cb}(G_i)}\cong \prod{\mathbb{R}\kappa_{G_i}^b}
\end{equation}
where the $\kappa_{G_i}^b$ are the classes we defined in section \ref{descr}.
The isomorphism is $\kappa\mapsto (\iota_i^{*}\kappa)_{i=1}^n$ with inverse $(\kappa_i)_{i=1}^n\mapsto\sum_{i=1}^n{\pi_i^{*}\kappa_i}$.
Under this isomorphism we have
$$\kappa_{G}^b=\sum_i{\kappa_{G_i}^b}.$$
The seminorm on $H^2_{cb}(G)$ is in fact a norm \cite{A11}.

For a class $\kappa\in H^2_{cb}(G)$ we can with a slight abuse of notation write $\kappa=\sum_{i}{\lambda_1\kappa_{G_i}^b}$ and we have
\begin{eqnarray*}
||\kappa||&=&sup_{(g_0,g_1,g_2)\in G}\kappa(g_0,g_1,g_2)=sup_{(g_0,g_1,g_2)\in G}\sum_i{\lambda_i\kappa_{G_i}^b(\pi_ig_0,\pi_ig_1,\pi_ig_2)}\\
&=&\sum_i{sup_{(g_0,g_1,g_2)\in G_i}\lambda_i\kappa_{G_i}^b(g_0,g_1,g_2)}=\sum_i{|\lambda_i| ||\kappa_{G_i}^b||}
\end{eqnarray*}
This implies that for positive classes $\kappa_1,\kappa_2\in H^2_{cb}(G)$ we have $||\kappa_1 + \kappa_2||=||\kappa_1|| +|| \kappa_2||$.

Fixing a $G$-invariant complex structure on $\mathcal{X}$ determines $G_i$-invariant complex structures on $\mathcal{X}_i$. These in turn determines a choice of K\"{a}hler forms $\omega_i$ which in turn determines a choice of basis $\kappa_{G_i}^b$ and hence an orientation on the $H^2_{cb}(G_i)$:s. In the following we assume such a choice of complex structures is made.
\begin{df}
We say that a class $\alpha \in H^2_{cb}(G)$ is
\begin{enumerate}
       \item  positive if $\alpha=\sum{\mu_i \kappa_{G_i}^b}$ where $\mu_i\geq 0$ for all $i=1,...,n$, and
       \item  strictly positive if $\alpha=\sum{\mu_i \kappa_{G_i}^b}$ where $\mu_i> 0$ for all $i=1,...,n$.
   \end{enumerate}
\end{df}
\begin{df}
We say that a homomorfism $\rho\colon H\rightarrow G$ is (strictly) positive if $\rho^* \kappa_{G}^b$ is (strictly) positive.
\end{df}
\begin{lma}\label{lma5}
If $\rho\colon G_1\rightarrow G_2$ is holomorphic then it is positive. If it is holomorphic and injective then it is strictly positive.
\end{lma}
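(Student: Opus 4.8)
The plan is to represent $\rho^*\kappa^b_{G_2}$ by an explicit $G_1$-invariant $2$-form on $\mathcal{X}_1$ and then read off the signs of its components. Write $G_1=L_1\times\dots\times L_n$ for the decomposition into simple factors, with the corresponding decomposition $\mathcal{X}_1=\mathcal{Y}_1\times\dots\times\mathcal{Y}_n$ into irreducible Hermitian symmetric spaces, projections $\pi_j\colon\mathcal{X}_1\to\mathcal{Y}_j$, K\"ahler forms $\omega_{\mathcal{Y}_j}$ and classes $\kappa^b_{L_j}$; under the isomorphism (\ref{isom}) the class $\kappa^b_{L_j}$ viewed in $H^2_{cb}(G_1)$ is represented by the cocycle $(g_0,g_1,g_2)\mapsto\int_{\Delta(g_0\cdot 0,g_1\cdot 0,g_2\cdot 0)}\pi_j^*\omega_{\mathcal{Y}_j}$. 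Let $f\colon\mathcal{X}_1\to\mathcal{X}_2$ be the totally geodesic map attached to $\rho$, normalised by $f(0)=0$, so that $f(g\cdot x)=\rho(g)\cdot f(x)$ and, by assumption, $f$ is holomorphic. First I would note that, since $\omega_2$ is closed and $\mathcal{X}_2$ is contractible, for every geodesic triangle $\Delta=\Delta(g_0\cdot 0,g_1\cdot 0,g_2\cdot 0)$ in $\mathcal{X}_1$ the $2$-chains $f(\Delta)$ and $\Delta(f(g_0\cdot 0),f(g_1\cdot 0),f(g_2\cdot 0))$ have the same boundary, hence $\int_{\Delta(f(g_0\cdot 0),f(g_1\cdot 0),f(g_2\cdot 0))}\omega_2=\int_\Delta f^*\omega_2$; thus $(g_0,g_1,g_2)\mapsto\int_\Delta f^*\omega_2$ represents $\rho^*\kappa^b_{G_2}$.

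Next I would analyse the form $\alpha:=f^*\omega_2$. It is real, it is $G_1$-invariant because $f$ is $\rho$-equivariant and $\omega_2$ is $G_2$-invariant, and it is of type $(1,1)$ because $f$ is holomorphic, so $f^*$ preserves the Dolbeault bigrading. Since $\mathfrak{p}_j$ has no nonzero $K_j$-fixed vector ($\mathcal{Y}_j$ being irreducible and not a point), a $G_1$-invariant $2$-form on $\prod_j\mathcal{Y}_j$ has no mixed components, so $\alpha=\sum_j\pi_j^*\alpha_j$ with each $\alpha_j$ an $L_j$-invariant real $(1,1)$-form on the irreducible space $\mathcal{Y}_j$. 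By Schur's lemma there is, up to a positive real scalar, a unique $K_j$-invariant Hermitian inner product on the irreducible complex module $\mathfrak{p}_j^+$, so the $L_j$-invariant real $(1,1)$-forms on $\mathcal{Y}_j$ form the line $\mathbb{R}\,\omega_{\mathcal{Y}_j}$, and hence $\alpha_j=\mu_j\,\omega_{\mathcal{Y}_j}$ for some $\mu_j\in\mathbb{R}$. Evaluating at $0$ on a vector $X\in T_0\mathcal{Y}_j\subset T_0\mathcal{X}_1$, and using $df\circ J_1=J_2\circ df$ together with $\omega_i(Y,J_iY)=g_i(J_iY,J_iY)=\|Y\|^2$, I get
\[
\mu_j\,\|X\|^2=\mu_j\,\omega_{\mathcal{Y}_j}(X,J_1X)=\alpha(X,J_1X)=\omega_2(dfX,\,J_2\,dfX)=\|dfX\|^2\ \ge\ 0,
\]
so $\mu_j\ge 0$ for every $j$.

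Combining the two steps, $\rho^*\kappa^b_{G_2}$ is represented by $(g_0,g_1,g_2)\mapsto\sum_j\mu_j\int_\Delta\pi_j^*\omega_{\mathcal{Y}_j}$, that is $\rho^*\kappa^b_{G_2}=\sum_j\mu_j\,\kappa^b_{L_j}$ with all $\mu_j\ge 0$, which is exactly positivity. If moreover $\rho$ is injective, then $df_0=\rho|_{\mathfrak{p}_1}$ is injective, and by equivariance $df$ is injective at every point; the displayed identity then forces $\mu_j\,\|X\|^2=\|dfX\|^2>0$ for every nonzero $X\in T_0\mathcal{Y}_j$, so every $\mu_j>0$ and $\rho$ is strictly positive.

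The step I expect to carry the real content is the second one: the representation-theoretic fact that an invariant real $(1,1)$-form on an irreducible Hermitian symmetric space is a scalar multiple of the K\"ahler form, together with the vanishing of mixed components over a product. I would also want to be slightly careful about the identity $\int_{f(\Delta)}\omega_2=\int_\Delta f^*\omega_2$ when $f$ is not injective, but this follows routinely from $\omega_2$ being closed and $\mathcal{X}_2$ being simply connected (degree, or functoriality of the integral under pushforward of singular chains).
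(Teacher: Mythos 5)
Your proof is correct and follows essentially the same route as the paper: pull back the K\"ahler form along the holomorphic totally geodesic map, use invariance and holomorphy to identify $f^*\omega_2$ as $\sum_j\mu_j\,\omega_{\mathcal{Y}_j}$ with $\mu_j\ge 0$ (and $\mu_j>0$ under injectivity), and translate this into (strict) positivity of $\rho^*\kappa^b_{G_2}$. The paper compresses your Schur-lemma step into the single remark that $\rho$ is ``an isometry up to scaling''; your version just makes the factorwise scaling constants and the sign argument explicit.
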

\begin{proof}\ref{lma5}
Let $\mathcal{X}_i$ denote the corresponding symmetric spaces with K\"{a}lher forms $\omega_i$, complex structures $J_i$ and Riemanian metrics $g_i$ for $i=1,2$. We denote the induced map between the symmetric spaces by $\rho$ also. $\rho^*\omega_2(X,Y)=g_2(\rho_*X, J_2\rho_*Y)=g_2(\rho_*X, \rho_*J_1Y)$. Since $\rho$ is an isometry up to scaling we have that  $\rho^*\omega_2(X,Y)$ is a positive multiple  of $\omega_1(X,Y)$ hence $\rho$ is positive. If we further assume injectivity we can conclude that this multiple is non-zero hence $\rho$ is strictly positive.
\end{proof}

\begin{proof}\ref{lma1}
 Let $H$ be the isometry group of $\mathcal{Y}$ and $G=G_1\times ... \times G_n$ the isometry group of $\mathcal{X}_1\times ... \times \mathcal{X}_n$. We write $\rho_i$ as $\rho=(\rho_1,...,\rho_n)$. We have $$||\rho^*\kappa^b_G||=||\sum{\rho_i^* \kappa_{G_i}^b}||=\sum{||\rho_i^* \kappa_{G_i}^b||}\leq\sum{||\kappa_{G_i}^b||=||\kappa^b_{G}||}$$ where the second equality follows from positivity. Thus $\rho$ is tight if and only if all the $\rho_i$ are tight. 
\end{proof}
\begin{proof}\ref{lma2}
Using Lemma \ref{lma1} we may assume $\mathcal{Z}$ is irreducible. Let $\mathcal{Y}=\mathcal{Y}_1\times ... \times\mathcal{Y}_n$ be a decomposition of $\mathcal{Y}$ into irreducible parts. Let $ H,G=G_1\times...\times G_n , L$ denote the isometry groups of $\mathcal{X},\mathcal{Y},\mathcal{Z}$. Again we write $\rho$ as $\rho=(\rho_1,...,\rho_n)$. Assume that $\rho$ and $\eta$ are tight. 

We have $\eta^*\kappa^b_{L}=\sum{c_i \kappa^b_{G_i}}$ such that $ \sum{||c_i \kappa^b_{G_i}||}=\sum{c_i|| \kappa^b_{G_i}||}=||\kappa^b_{L}||$. We get
$||\rho^*\eta^*\kappa^b_{L}|| =||\sum{\rho_i^*c_i \kappa^b_{G_i}}||=\sum{c_i ||\rho_i^* \kappa^b_{G_i}||}=\sum{|c_i| || \kappa^b_{G_i}||}=||\kappa^b_{L}||$ so $\eta\circ\rho$ is tight. 

Assume instead that $\eta\circ\rho$ is tight. We have $||\kappa_L^b||=||\rho^{*}\eta^{*}\kappa_L^b||\leq||\eta^{*}\kappa_L^b||\leq||\kappa_L^b||$ which implies that $\eta$ is tight.
We have that $\eta^{*}\kappa_L^b=\sum{c_i\kappa^b_{G_i}}$ such that $\sum{c_i ||\kappa^b_{G_i}||}=||\kappa^b_{L}||$. Lemma \ref{lma5} assures us that all $c_i>0$.
The composition is tight so
$||\kappa^b_{L}|| =||\rho^*\eta^*\kappa^b_{L}|| =||\sum{\rho_i^*c_i \kappa^b_{G_i}}||=\sum{c_i ||\rho_i^* \kappa^b_{G_i}||}\leq\sum{c_i || \kappa^b_{G_i}||}=||\kappa^b_{L}||$ 
, so the inequality in the equation above is an equality.
Since we have established that all $c_i>0$ the inequality is an equality if and only if $||\rho_i^{*}\kappa^b_{G_i}||=||\kappa_{G_i}^b||$ for all $i$ which is true if and only $\rho$ is tight.
\end{proof}
The norms $||\kappa^b_{G}||$ were computed in \cite{A4} and equals $r_{\mathcal{X}}\pi$, where $r_{\mathcal{X}}$ is the rank of the symmetric space $\mathcal{X}$ associated to $G$. Another approach using the Maslov index can be found in \cite{A12}.

Regular subalgebras are formed by choosing a subset of the root system. Fulfilling certain conditions this subset works as the set of simple roots for the subalgebra. More precisely, let $\mathfrak{g}$ be a Hermitian Lie algebra with complexification $\mathfrak{g}^{\mathbb{C}}$. Let $\Lambda$ denote the root system of $\mathfrak{g}^{\mathbb{C}}$. A subset $\Delta$ of $\Lambda$ is called a $\Pi$-system if it satisfies\\
(i) If $\alpha,\beta \in \Delta$ then $\alpha-\beta\notin\Delta$\\
(ii) $\Delta$ is a linearly independent set in $i\mathfrak{h}^*$\\
(iii) Each connected component of the Dynkin diagram of $\Delta$ contains at most one non-compact root.\\
From this $\Pi$-system we form a new root system 
$\Lambda(\Delta):=(\sum_{\alpha\in\Delta}{\mathbb{Z}\alpha)\cap\Lambda}$\\ and a subalgebra of the complexification by
$\mathfrak{g}^{\mathbb{C}}(\Delta):=\sum_{\alpha\in\Delta}{\mathbb{C}H_\alpha}+\sum_{\alpha\in\Lambda(\Delta)}{\mathfrak{g}^\alpha}$.\\
Finally we define the regular subalgebra as $\mathfrak{g}(\Delta):=\mathfrak{g}^{\mathbb{C}}(\Delta)\cap\mathfrak{g}$.

\section{Criterions for tightness}

\begin{thm}\label{subalg}
Let $\mathfrak{g}$ be a simple Hermitian Lie algebra with regular subalgebra $\mathfrak{g}_1\oplus...\oplus\mathfrak{g}_k$, each $\mathfrak{g}_i$ being simple, and  $\mathcal{X}_1\times...\times\mathcal{X}_k \subset \mathcal{X}$ the corresponding symmetric spaces. Let $\Lambda$ be the root system of $\mathfrak{g}^{\mathbb{C}}$ and $\Lambda_i$ the sub-root systems corresponding to the $\mathfrak{g}_i^{\mathbb{C}}$. Let $\gamma_i$ be the highest root of $\Lambda_i$ with respect to some ordering of $\Lambda_i$ and let $\gamma$ be the highest root of $\Lambda$. Further let $r$ and $r_i$ denote the ranks.
If 
\begin{equation}
\sum_{i}{  \frac{\langle\gamma,\gamma\rangle}{\langle\gamma_i,\gamma_i\rangle}r_i  }=r,\label{kvoten}
\end{equation}
where the brackets denote the Killing form of $\mathfrak{g}^{\mathbb{C}}$, then the inclusion  $\mathcal{X}_1\times...\times\mathcal{X}_k\subset \mathcal{X}$ is tight.
\end{thm}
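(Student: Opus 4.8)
The plan is to compute both sides of the tightness inequality $\|\iota^*\kappa_G^b\| \le \|\kappa_G^b\|$ explicitly in terms of root data, using the formula $\|\kappa_G^b\| = r_{\mathcal X}\pi$ recalled after the proof of Lemma \ref{lma2}. The right-hand side is $\|\kappa_{G}^b\| = r\pi$ where $r$ is the rank of $\mathcal X$. For the left-hand side, write $\iota^*\kappa_G^b = \sum_i c_i\,\kappa_{G_i}^b$; by Lemma \ref{lma5} the inclusion is positive, so $c_i \ge 0$, and hence by the norm computation preceding the definition of positivity, $\|\iota^*\kappa_G^b\| = \sum_i c_i \|\kappa_{G_i}^b\| = \sum_i c_i\, r_i\,\pi$. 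So the theorem reduces to showing that under hypothesis \eqref{kvoten} one has $\sum_i c_i r_i = r$, i.e. that the normalization constants $c_i$ are exactly the ratios $\langle\gamma,\gamma\rangle/\langle\gamma_i,\gamma_i\rangle$.

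The key step is therefore to identify $c_i$. The constant $c_i$ is the factor by which $\iota$ rescales the metric on the $i$-th factor; equivalently, since $\omega_i$ and $\kappa_{G_i}^b$ are normalized so that the minimal holomorphic sectional curvature is $-1$ on each irreducible factor, $c_i$ compares the restriction of the metric $g$ of $\mathcal X$ to $\mathcal X_i$ with the intrinsic normalized metric $g_i$ of $\mathcal X_i$. I would compute this via the Killing form: on a Hermitian symmetric space the normalized metric is a fixed multiple of the Killing form restricted to $\mathfrak p$, and the relevant scaling is governed by the length of the highest (noncompact) root, which for an irreducible space controls the minimal holomorphic sectional curvature. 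Concretely, for the ambient space the curvature normalization is pinned by $\langle\gamma,\gamma\rangle$ and for the subspace $\mathcal X_i$ by $\langle\gamma_i,\gamma_i\rangle$ (both brackets taken in the \emph{same} form, the Killing form of $\mathfrak g^{\mathbb C}$, since $\mathfrak g_i^{\mathbb C}$ is a regular subalgebra and its root spaces sit inside those of $\mathfrak g^{\mathbb C}$). This gives $c_i = \langle\gamma,\gamma\rangle/\langle\gamma_i,\gamma_i\rangle$. Plugging into $\|\iota^*\kappa_G^b\| = \pi\sum_i c_i r_i$ and comparing with $\|\kappa_G^b\| = r\pi$, hypothesis \eqref{kvoten} gives equality, which is exactly tightness.

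The main obstacle I expect is the precise bookkeeping in the previous paragraph: making rigorous the claim that the ratio of normalized metrics on $\mathcal X_i \subset \mathcal X$ equals $\langle\gamma,\gamma\rangle/\langle\gamma_i,\gamma_i\rangle$. This requires knowing how the minimal holomorphic sectional curvature of an irreducible Hermitian symmetric space is expressed in root-theoretic terms — the standard fact being that, with the metric induced from (a fixed normalization of) the Killing form, the minimal holomorphic sectional curvature is proportional to $1/\langle\gamma,\gamma\rangle$ where $\gamma$ is the highest root. One must check that the ordering chosen on $\Lambda_i$ (the theorem only fixes "some ordering") does not affect $\langle\gamma_i,\gamma_i\rangle$ — which holds because all highest roots of a simple system, for any ordering, lie in a single Weyl-group orbit together with the other long roots, so $\langle\gamma_i,\gamma_i\rangle$ is the squared length of a long root of $\Lambda_i$ and is ordering-independent. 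Once this curvature-normalization lemma is in place, the rest is the short linear-algebra computation with norms of bounded cohomology classes sketched above; I would also remark that the argument in fact shows the inequality $\sum_i c_i r_i \le r$ always holds (positivity plus $\|\iota^*\kappa_G^b\|\le\|\kappa_G^b\|$), so \eqref{kvoten} is the borderline case forcing tightness.
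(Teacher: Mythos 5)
Your proposal follows essentially the same route as the paper: reduce tightness to $\sum_i c_i r_i = r$, identify $c_i$ as the metric-rescaling constant on the $i$-th factor, and compute it as a ratio of minimal holomorphic sectional curvatures expressed through the squared lengths of the highest roots, both measured in the ambient Killing form. The only point to correct is the direction of your ``standard fact'': with a fixed multiple of the Killing form as metric the minimal holomorphic sectional curvature is proportional to $\langle\gamma,\gamma\rangle$, not $1/\langle\gamma,\gamma\rangle$ (the paper computes it explicitly as $\langle\gamma,\gamma\rangle/b$ by evaluating the curvature on $\xi=e+\bar e$ with $e$ a highest-root vector), which is what yields the formula $c_i=\langle\gamma,\gamma\rangle/\langle\gamma_i,\gamma_i\rangle$ that you then correctly state.
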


\begin{proof}
Let $g$ and $g_i$ be the invariant metrics on $\mathcal{X}$ respectively $\mathcal{X}_i$, normalized so that the minimal holomorphic sectional curvature is $-1$. Further let $\omega$ and $\omega_i$ be the associated K\"{a}hler forms. We have that $g|_{\mathcal{X}_i}=c_i g_i$ for some $c_i$ and since the inclusion is holomorphic we also have $\omega|_{\mathcal{X}_i}=c_i\omega_i$. Here $c_i\geq 1$ since the holomorphic sectional curvature of $g$ is $\geq -1$ on $\mathcal{X}_i$.
 We get 
\begin{eqnarray}
sup_{\Delta\subset\mathcal{X}_1\times...\times \mathcal{X}_k } \int_{\Delta}\omega&=&\sum_i{sup_{\Delta\subset\mathcal{X}_i}\int_{\Delta}\omega}\label{kurv1}\\
&=&\sum_i{sup_{\Delta\subset\mathcal{X}_i}\int_{\Delta}c_i \omega_i}=\sum_i{c_i r_i \pi}\nonumber
\end{eqnarray}
thus the inclusion is tight if $\sum_i{c_i r_i}=r$. Let us examine the $c_i$ further.
The minimal holomorphic sectional curvature on $\mathcal{X}_i$ with respect to $g$, $mincurv(\mathcal{X}_i,g)$, is $-\frac{1}{c_i}$.
If we consider the quotient $\frac{mincurv(\mathcal{X},g)}{mincurv(\mathcal{X}_i,g)}=c_i$ we can calculate the $c_i$ without concerns about the normalization of the metric.
Identifying  $T_0\mathcal{X}$ with $\mathfrak{p}$ with for some reference point $0$, we have $g(X,Y)= b\langle X,Y\rangle $ for some $b>0$ where the brackets denote the Killing form of $\mathfrak{g}^{\mathbb{C}}$. 
Let $\mathfrak{g}^{\mathbb{C}}$ be the complexification of $\mathfrak{g}$, and $\mathfrak{g}^{\mathbb{C}}=\mathfrak{h}+\sum_\alpha{\mathfrak{g}^\alpha}$ a root space decomposition.
Pick $e\in\mathfrak{g}^{\gamma}$ and $\bar{e}\in\mathfrak{g}^{-\gamma}$  such that $\xi=e+\bar{e}\in \mathfrak{p}$.  We reach the minimal holomorphic sectional curvature in the complex line spanned by $\xi$. This can be deduced from the construction by Harish-Chandra of the realisation of $\mathcal{X}$ as a bounded symmetric domain, see for example \cite{A3}. Further let $H_\gamma$ be the element of $\mathfrak{h}$ such that $\langle H,H_\gamma\rangle =\gamma(H)$ for all $H\in\mathfrak{h}$.  We get 
\begin{eqnarray}
mincurv(\mathcal{X},g)&=&\frac{b\langle[\xi,J\xi],[\xi,J\xi]\rangle }{b^2\langle\xi,\xi\rangle \langle J\xi,J\xi\rangle }=\frac{\langle[\xi,J\xi],[\xi,J\xi]\rangle }{b\langle\xi,\xi\rangle ^2}\nonumber\\
&=&\frac{\langle[e+\bar{e},ie-i\bar{e}],[e+\bar{e},ie-i\bar{e}]\rangle }{b\langle e+\bar{e},e+\bar{e}\rangle ^2}=\frac{4\langle[e,\bar{e}],[e,\bar{e}]\rangle }{4b\langle e,\bar{e}\rangle ^2}\\
&=&\frac{\langle\langle e,\bar{e}\rangle H_\gamma,\langle e,\bar{e}\rangle H_\gamma\rangle }{b\langle e,\bar{e}\rangle ^2}=\frac{\langle \gamma,\gamma\rangle }{b}
\end{eqnarray}
Now let $\gamma_i$ denote the highest root of $\mathfrak{g}_i^{\mathbb{C}}$ and pick $\bar{e_i},e_i$ like with $\gamma$. We then get 
$$c_i=\frac{mincurv(\mathcal{X},g)}{mincurv(\mathcal{X}_i,g)}=\frac{\langle \gamma,\gamma\rangle }{\langle \gamma_i,\gamma_i\rangle }.$$
\end{proof}

\begin{cor}\label{cor}
Let $\rho\colon\mathcal{X}_1\times...\times\mathcal{X}_n\rightarrow \mathcal{X}$ be a holomorphic, totally geodesic embedding. Let $r_i$ be the rank of $\mathcal{X}_i$ and $r$ the rank of $\mathcal{X}$. If $\sum{r_i}=r$ then $\rho$ is tight.
\end{cor}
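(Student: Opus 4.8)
The plan is to derive Corollary \ref{cor} from Theorem \ref{subalg} by reducing the embedding to the study of regular subalgebras and then verifying the rank condition \eqref{kvoten}. First I would use Lemma \ref{lma3}: the holomorphic totally geodesic embedding $\rho\colon\mathcal{X}_1\times\dots\times\mathcal{X}_n\rightarrow\mathcal{X}$ corresponds to an (H1)-homomorphism of the associated Hermitian Lie algebras, and Lemma \ref{lma3} produces a regular subalgebra $\mathfrak{g}''=\mathfrak{g}_1''\oplus\dots\oplus\mathfrak{g}_n''\subset\mathfrak{g}$ with $\rho(\mathfrak{g}_i)\subset\mathfrak{g}_i''$ and each restriction $\rho|\colon\mathfrak{g}_i\rightarrow\mathfrak{g}_i''$ satisfying (H2). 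By Lemma \ref{lma2} (with $\eta$ the inclusion of the regular subalgebra, which is injective) it suffices to show both that the inclusion $\mathcal{X}_1''\times\dots\times\mathcal{X}_n''\subset\mathcal{X}$ is tight and that each factor map $\rho\colon\mathcal{X}_i\rightarrow\mathcal{X}_i''$ is tight.

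For the first part I would apply Theorem \ref{subalg} to the regular subalgebra $\mathfrak{g}_1''\oplus\dots\oplus\mathfrak{g}_n''\subset\mathfrak{g}$, so I must check that $\sum_i \frac{\langle\gamma,\gamma\rangle}{\langle\gamma_i'',\gamma_i''\rangle} r_i'' = r$, where $\gamma_i''$ is the highest root of $\Lambda_i''$ and $r_i''=\mathrm{rank}(\mathcal{X}_i'')$. The key structural fact to invoke here is that a regular subalgebra arising this way is generated by a $\Pi$-system consisting of roots of $\mathfrak{g}^{\mathbb{C}}$, and that condition (iii) together with (H2) forces the non-compact (highest) roots $\gamma_i''$ to be \emph{long} roots of $\Lambda$ — equivalently, $\langle\gamma_i'',\gamma_i''\rangle=\langle\gamma,\gamma\rangle$, since $\gamma$ is the highest root of the simple algebra $\mathfrak{g}^{\mathbb{C}}$ and Hermitian type forces the relevant root lengths to match. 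Granting this, the required identity reduces to $\sum_i r_i'' = r$. That in turn should follow from a dimension/rank bookkeeping argument: an injective holomorphic (H2)-map preserves rank, i.e. $r_i''=r_i$, because under (H2) a maximal polydisk of $\mathcal{X}_i$ maps to a maximal polydisk of $\mathcal{X}_i''$, so $r_i=r_i''$; combined with the hypothesis $\sum_i r_i=r$ this gives exactly what Theorem \ref{subalg} needs.

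For the second part — tightness of each $\rho\colon\mathcal{X}_i\rightarrow\mathcal{X}_i''$ — I would argue that an injective holomorphic totally geodesic (H2)-map between irreducible Hermitian symmetric spaces of \emph{equal rank} is automatically tight. This is the standard fact that for holomorphic maps $\int_\Delta f^*\omega$ controls the Toledo-type invariant and equality of ranks forces the normalization constant $c_i$ (in the notation of the proof of Theorem \ref{subalg}) to equal $1$, so the supremum $\sup_\Delta\int_\Delta f^*\omega = r_i\pi = r_i''\pi = \|\kappa^b\|$; concretely, (H2) guarantees that a maximal polydisk of $\mathcal{X}_i$ maps isometrically into $\mathcal{X}_i''$, which pins down the normalization. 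Thus each factor map is tight, and by Lemma \ref{lma1} and Lemma \ref{lma2} the whole composition $\rho$ is tight.

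The main obstacle I expect is the root-length claim $\langle\gamma_i'',\gamma_i''\rangle=\langle\gamma,\gamma\rangle$ and the rank-preservation claim $r_i''=r_i$ — i.e. showing that the regular subalgebra supplied by Lemma \ref{lma3} has its highest roots among the long roots of $\mathfrak{g}^{\mathbb{C}}$ and has the same total rank as $\prod\mathcal{X}_i$. This is where the hypothesis $\sum r_i=r$ and the holomorphic/(H2) structure must be combined carefully; once that geometric input is in place, plugging into Theorem \ref{subalg} and Lemma \ref{lma2} is routine.
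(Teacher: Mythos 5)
There is a genuine gap, and the route you chose is both unnecessary and built on two claims that fail. First, the rank-preservation claim $r_i''=r_i$ for (H2)-maps is false: an (H2)-homomorphism need not carry a maximal polydisk to a maximal polydisk, and rank can jump. The diagonal disc $\mathbb{D}\rightarrow Sp(2n)/U(n)$ and the spin representations $so(p,2)\rightarrow su(p',p')$ are (H2) yet increase the rank from $1$ (resp.\ $2$) to $n$ (resp.\ $p'$), and in the latter case the minimal regular subalgebra containing the image is all of $su(p',p')$, so $\mathfrak{g}_i''$ really can have $r_i''>r_i$. Second, the root-length claim $\langle\gamma_i'',\gamma_i''\rangle=\langle\gamma,\gamma\rangle$ is not a consequence of the $\Pi$-system axioms or of (H2): Section 5 of the paper exhibits regular subalgebras (e.g.\ the $su(l,p-l)\subset sp(2p)$ and the $su(1,1)\subset so(2k-1,2)$ given by $\beta_4$) whose highest roots are short, so the quotient in \eqref{kvoten} equals $2$. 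Both claims are exactly the points you flag as "obstacles," and neither can be repaired as stated; at best one can squeeze $r_i\le r_i''$ and $\sum r_i''\le r$ against $\sum r_i=r$, but that still leaves you needing tightness of each factor map, for which you appeal to "the standard fact" that an equal-rank injective holomorphic map is tight --- which is precisely the $n=1$ case of the corollary you are proving, so the argument is circular.

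The intended proof bypasses all of this. Identify the image of $\rho$ with $\mathcal{X}_1\times\dots\times\mathcal{X}_n$ as a totally geodesic, holomorphic subsymmetric space of $\mathcal{X}$ (no regular subalgebra, no (H2) needed) and run the computation \eqref{kurv1} from the proof of Theorem \ref{subalg}: with $\omega|_{\mathcal{X}_i}=c_i\omega_i$ one gets $\sup_{\Delta}\int_\Delta\omega=\sum_i c_ir_i\pi$, where $c_i\ge 1$ because the ambient holomorphic sectional curvature is $\ge -1$ and restricts without change along a totally geodesic submanifold, while $\sum_i c_ir_i\le r$ by the general inequality \eqref{tightdef}. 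The hypothesis $\sum r_i=r$ then forces $c_i=1$ for all $i$ and $\sum c_ir_i=r$, i.e.\ $\rho$ is tight. This is the curvature-normalization argument you placed in your "second part"; it is the whole proof, applied directly to $\rho$ rather than to the factor maps of an (H2)-factorization.
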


\begin{proof}
Viewing $\mathcal{X}_1\times ... \times\mathcal{X}_n$ as a subsymmetric space of $\mathcal{X}$ we can argue as in the proof of Theorem \ref{subalg}. From (\ref{kurv1}) we have that the inclusion is tight if $\sum{c_i r_i}=r$. We also have that $c_i\geq 1$, $\sum{c_i r_i}\leq r$ and that $\sum{r_i}=r$ by assumption.
Thus all the $c_i=1$ and the inequality is an equality, i.e $\rho$ is tight.
\end{proof}

As is well known, we can in each Hermitian symmetric space $\mathcal{X}$ of rank $r$ holomorphically embed the product of  $r$ Poincare discs $\mathbb{D}$. This embedding is tight\cite{A8}. Lemma \ref{lma1} also shows that the diagonal embedding $d\colon \mathbb{D}\rightarrow \mathbb{D}\times...\times\mathbb{D}$ is tight. The composition of $d\colon \mathbb{D}\rightarrow \mathbb{D}^{r}$ and $\iota\colon  \mathbb{D}^{r}\rightarrow \mathcal{X}$ is then a tight embedding. We call these embeddings \emph{diagonal discs}. They will be needed in the first of the following two Theorems from \cite{A8}.

\begin{thm}\label{diagd}
Let $\rho\colon\mathcal{X}\rightarrow \mathcal{X}^{'}$ be a holomorphic and totally geodesic embedding. Further let $d$ and $d^{'}$ be embeddings of diagonal discs in $\mathcal{X}$ and $\mathcal{X}^{'}$. Denote by the same letters the corresponding Lie algebra homomorphisms, where we let  $\mathfrak{g}$ and $\mathfrak{g}^{'}$ denote the Lie algebras corresponding to $\mathcal{X}$ and $\mathcal{X}^{'}$. Further let $Z_{\mathbb{D}}\in su(1,1)$ and $Z^{'}\in\mathfrak{g}^{'}$ be the complex structures of the disc and of $\mathcal{X}^{'}$. Then $\rho$ is tight if and only if $\langle \rho dZ_{\mathbb{D}},Z^{'}\rangle =\langle d^{'}Z_{\mathbb{D}},Z^{'}\rangle $, where the brackets denote the Killing form of $\mathfrak{g}^{'}$.
\end{thm}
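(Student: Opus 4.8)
The plan is to reduce the tightness of $\rho$ to a single rank-one computation on a Poincar\'e disc, where both the space of invariant $2$-forms and the second bounded cohomology are one-dimensional. First I would fix a diagonal disc $d\colon\mathbb{D}\rightarrow\mathcal{X}$ and consider $\rho\circ d\colon\mathbb{D}\rightarrow\mathcal{X}^{'}$. Since diagonal discs are tight and $\rho$ is injective, Lemma \ref{lma2} shows that $\rho$ is tight if and only if $\rho\circ d$ is. Now $\rho\circ d$ is holomorphic, totally geodesic and injective, so by Lemma \ref{lma5} its pullback of $\omega_2$ is a positive multiple of the unique (up to scale) invariant $2$-form $\omega_{\mathbb{D}}$ on the disc; write $(\rho\circ d)^{*}\omega_2=\mu\,\omega_{\mathbb{D}}$ with $\mu>0$. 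Consequently $(\rho\circ d)^{*}\kappa_{G^{'}}^{b}=\mu\,\kappa_{SU(1,1)}^{b}$. Since an ideal triangle in the curvature $-1$ disc has area $\pi$ we have $\|\kappa_{SU(1,1)}^{b}\|=\pi$, and $\|\kappa_{G^{'}}^{b}\|=r^{'}\pi$ by \cite{A4}, with $r^{'}$ the rank of $\mathcal{X}^{'}$. As $(\rho\circ d)^{*}$ is norm non-increasing, $\mu\leq r^{'}$ with equality precisely when $\rho\circ d$ --- equivalently $\rho$ --- is tight. So everything comes down to identifying $\mu$.

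For the second step I would compute $\mu$ in Lie-algebraic terms. Normalise $\rho,d,d^{'}$ to fix the base point, and by Lemma \ref{lma1} assume first that $\mathcal{X}^{'}$ is irreducible, so that on $\mathfrak{p}^{'}$ the invariant metric is $b^{'}\langle\cdot,\cdot\rangle$ for a single $b^{'}>0$ and $J_2=ad(Z^{'})$; thus $\omega_2(W_1,W_2)=b^{'}\langle[Z^{'},W_1],W_2\rangle$ on $\mathfrak{p}^{'}$. Being holomorphic, $\rho\circ d$ satisfies (H1), i.e. $[Z^{'},(\rho d)(X)]=(\rho d)([Z_{\mathbb{D}},X])$ for $X\in\mathfrak{p}_{\mathbb{D}}$. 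Choose a basis $e_1,e_2$ of the two-dimensional space $\mathfrak{p}_{\mathbb{D}}\subset su(1,1)$ with $[Z_{\mathbb{D}},e_1]=e_2$, $[Z_{\mathbb{D}},e_2]=-e_1$, $[e_1,e_2]=\lambda Z_{\mathbb{D}}$ for the intrinsic constant $\lambda>0$, and evaluate $\mu\,\omega_{\mathbb{D}}(e_1,e_2)=(\rho d)^{*}\omega_2(e_1,e_2)$. Using (H1), the skew-symmetry of $ad(Z^{'})$, and the fact that $ad(Z^{'})$ is a $\langle\cdot,\cdot\rangle$-isometry of $\mathfrak{p}^{'}$, this rearranges to
\[ \mu=\kappa_0\,\langle(\rho d)(Z_{\mathbb{D}}),Z^{'}\rangle, \]
where $\kappa_0>0$ depends only on $b^{'}$, $\lambda$ and the normalisation of $\omega_{\mathbb{D}}$; in particular $\kappa_0$ is the same constant for every holomorphic totally geodesic map $\mathbb{D}\rightarrow\mathcal{X}^{'}$.

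I would then apply the displayed formula to the model $d^{'}\colon\mathbb{D}\rightarrow\mathcal{X}^{'}$, which is a diagonal disc and hence tight, getting $r^{'}=\kappa_0\,\langle d^{'}(Z_{\mathbb{D}}),Z^{'}\rangle$. Combining the three steps, $\rho$ is tight $\iff\mu=r^{'}\iff\langle(\rho d)(Z_{\mathbb{D}}),Z^{'}\rangle=\langle d^{'}(Z_{\mathbb{D}}),Z^{'}\rangle$, which is the assertion. For reducible $\mathcal{X}^{'}=\mathcal{X}_1^{'}\times\cdots\times\mathcal{X}_m^{'}$ one argues factor by factor: by Lemma \ref{lma1}, $\rho$ is tight iff every $\pi_j\circ\rho$ is; the per-factor version of the displayed formula gives $\langle\pi_j(\rho d)(Z_{\mathbb{D}}),\pi_j Z^{'}\rangle\leq\langle\pi_j d^{'}(Z_{\mathbb{D}}),\pi_j Z^{'}\rangle$ with equality iff $\pi_j\circ\rho$ is tight; and summing over $j$, using that the ideals of $\mathfrak{g}^{'}$ are orthogonal for the Killing form, yields $\langle(\rho d)(Z_{\mathbb{D}}),Z^{'}\rangle\leq\langle d^{'}(Z_{\mathbb{D}}),Z^{'}\rangle$ with equality iff $\rho$ is tight.

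The step I expect to be the main obstacle is the bookkeeping in the Lie-algebraic computation of $\mu$: keeping the metric normalisations (the constant $b^{'}$, the area $\pi$ of the ideal triangle, the Killing-form constants of $su(1,1)$) mutually consistent, fixing the signs so that $\mu$ and $\kappa_0$ come out positive, and --- most importantly --- verifying that $\kappa_0$ does not depend on $\rho$, since it is exactly this independence that allows the model disc $d^{'}$ to calibrate the tightness condition.
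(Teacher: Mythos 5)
The paper does not actually prove this theorem: it is introduced as one of ``the following two Theorems from \cite{A8}'' and quoted without proof, so there is no in-paper argument to compare against. Your proof is nonetheless essentially correct, and it follows the natural (and, as far as I can tell, the source's) strategy: precompose with a diagonal disc and use Lemma \ref{lma2} to reduce tightness of $\rho$ to tightness of $\rho\circ d\colon\mathbb{D}\rightarrow\mathcal{X}^{'}$, then identify the factor $\mu$ in $(\rho d)^{*}\omega_2=\mu\,\omega_{\mathbb{D}}$ with a Killing-form pairing against $Z^{'}$ and calibrate with the tight model $d^{'}$. The step you flag as the main obstacle --- independence of $\kappa_0$ from $\rho$ --- is in fact immediate from invariance of the Killing form: $\langle[Z^{'},(\rho d)e_1],(\rho d)e_2\rangle=\langle Z^{'},[(\rho d)e_1,(\rho d)e_2]\rangle=\langle Z^{'},(\rho d)[e_1,e_2]\rangle=\lambda\langle Z^{'},(\rho d)Z_{\mathbb{D}}\rangle$, so $\mu\,\omega_{\mathbb{D}}(e_1,e_2)=b^{'}\lambda\langle(\rho d)Z_{\mathbb{D}},Z^{'}\rangle$ and $\kappa_0=b^{'}\lambda/\omega_{\mathbb{D}}(e_1,e_2)$ depends only on $\mathcal{X}^{'}$ and the fixed normalisations; you do not even need (H1) for this, only that $\rho d$ is a Lie algebra homomorphism. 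Two small points: with the convention $[Z_{\mathbb{D}},e_1]=e_2$ one finds $\lambda<0$, and the Killing form is negative definite on $\mathfrak{k}^{'}$, so $\kappa_0$ comes out negative rather than positive --- harmless for the equality criterion, but it reverses the factorwise inequality you sum in the reducible case (since the sign is the same for every factor, the ``equality iff every summand is an equality'' conclusion survives). Also, in the reducible case you should record that $\pi_j\circ d^{'}$ is again a diagonal disc in $\mathcal{X}_j^{'}$, which is what entitles you to quote the irreducible case factor by factor.
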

\begin{thm}\label{tube}
Suppose $\mathcal{X}_1$, $\mathcal{X}_2$ are Hermitian symmetric spaces of tube type. Then $\rho\colon \mathcal{X}_1\rightarrow\mathcal{X}_2$ is a tight and holomorphic embedding if and only if the corresponding Lie algebra homomorphism $\rho\colon\mathfrak{g}_1\rightarrow\mathfrak{g}_2$ is an (H2)-homomorphism.
\end{thm}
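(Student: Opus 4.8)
The plan is to reduce both the tightness and the (H2) condition to a single numerical identity involving $\rho Z_1$ and the Killing form of $\mathfrak{g}_2$, and then match them using Theorem \ref{diagd}. The crucial input is a structural fact about tube type domains, which I would establish first: \emph{if $\mathcal{X}$ is of tube type and $d\colon\mathbb{D}\to\mathcal{X}$ is the embedding of a diagonal disc (the composite $\mathbb{D}\to\mathbb{D}^{r}\hookrightarrow\mathcal{X}$ through a maximal polydisc), then $dZ_{\mathbb{D}}=Z_{\mathcal{X}}$}, i.e. the maximal polydisc embedding is itself an (H2)-homomorphism. With $\gamma_1,\dots,\gamma_r$ the strongly orthogonal noncompact roots of the Harish--Chandra cascade and $H_{\gamma_j}\in\mathfrak{h}$ defined by $\langle H,H_{\gamma_j}\rangle=\gamma_j(H)$, the complex structure element of the $j$-th disc is $iH_{\gamma_j}/\langle\gamma_j,\gamma_j\rangle$, and since the $\gamma_j$ all share the length $\ell:=\langle\gamma,\gamma\rangle$ of the highest root, $dZ_{\mathbb{D}}=(i/\ell)\sum_jH_{\gamma_j}$. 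Evaluating a root $\alpha$ on this element gives $(i/\ell)\sum_j\langle\alpha,\gamma_j\rangle$, so the claim $dZ_{\mathbb{D}}=Z_{\mathcal{X}}$ amounts to $\sum_j\langle\alpha,\gamma_j\rangle$ equalling $\ell,-\ell,0$ according as $\alpha$ is noncompact positive, noncompact negative or compact; this is precisely the condition that the restricted root system relative to the $\gamma_j$ be of type $C_r$, i.e. that $\mathcal{X}$ be of tube type (for $BC_r$ the short noncompact roots violate it). Using strong orthogonality one also reads off $\langle Z_{\mathcal{X}},Z_{\mathcal{X}}\rangle=-r/\ell$.

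Next I would do the Killing-form bookkeeping, first assuming $\mathfrak{g}_1$ simple. By the curvature computation in the proof of Theorem \ref{subalg}, the normalized metric satisfies $g_i|_{\mathfrak{p}_i}=\ell_i\langle\cdot,\cdot\rangle_{\mathfrak{g}_i}$ with $\ell_i=\langle\gamma^{(i)},\gamma^{(i)}\rangle$; writing $\rho^*g_2=c\,g_1$ on $\mathfrak{p}_1$ with $c\ge1$ as there, and using that $\rho^*\langle\cdot,\cdot\rangle_{\mathfrak{g}_2}$ is a scalar multiple of $\langle\cdot,\cdot\rangle_{\mathfrak{g}_1}$ since $\mathfrak{g}_1$ is simple, one obtains $\langle\rho X,\rho Y\rangle_{\mathfrak{g}_2}=(c\,\ell_1/\ell_2)\langle X,Y\rangle_{\mathfrak{g}_1}$ on all of $\mathfrak{g}_1$. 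Since $Z_1=dZ_{\mathbb{D}}$ has $\langle Z_1,Z_1\rangle_{\mathfrak{g}_1}=-r_1/\ell_1$ by the tube type fact,
\[
\langle\rho Z_1,\rho Z_1\rangle_{\mathfrak{g}_2}=-\frac{c\,r_1}{\ell_2},\qquad\langle Z_2,Z_2\rangle_{\mathfrak{g}_2}=-\frac{r_2}{\ell_2},
\]
so $\langle\rho Z_1,\rho Z_1\rangle_{\mathfrak{g}_2}=\langle Z_2,Z_2\rangle_{\mathfrak{g}_2}$ if and only if $c\,r_1=r_2$. On the other hand $\rho$ holomorphic and totally geodesic gives $\rho^*\omega_2=c\,\omega_1$, hence $\sup_{\Delta\subset\mathcal{X}_1}\int_\Delta\rho^*\omega_2=c\,r_1\pi$ and $\sup_{\Delta\subset\mathcal{X}_2}\int_\Delta\omega_2=r_2\pi$; so by the definition of tightness, $\rho$ is tight $\iff c\,r_1=r_2\iff\langle\rho Z_1,\rho Z_1\rangle_{\mathfrak{g}_2}=\langle Z_2,Z_2\rangle_{\mathfrak{g}_2}$.

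The two implications are then immediate. If $\rho$ is (H2), then $\rho Z_1=Z_2$, so $\langle\rho Z_1,\rho Z_1\rangle=\langle Z_2,Z_2\rangle$ and $\rho$ is tight. Conversely, if $\rho$ is tight, then $\langle\rho Z_1,\rho Z_1\rangle_{\mathfrak{g}_2}=\langle Z_2,Z_2\rangle_{\mathfrak{g}_2}$, and Theorem \ref{diagd} applied with $d,d'$ the diagonal discs of $\mathcal{X}_1,\mathcal{X}_2$, together with $dZ_{\mathbb{D}}=Z_1$ and $d'Z_{\mathbb{D}}=Z_2$ from the tube type fact, gives $\langle\rho Z_1,Z_2\rangle_{\mathfrak{g}_2}=\langle Z_2,Z_2\rangle_{\mathfrak{g}_2}$; hence
\[
\langle\rho Z_1-Z_2,\rho Z_1-Z_2\rangle_{\mathfrak{g}_2}=\langle\rho Z_1,\rho Z_1\rangle-2\langle\rho Z_1,Z_2\rangle+\langle Z_2,Z_2\rangle=0,
\]
and as $\rho Z_1-Z_2\in\mathfrak{k}_2$, where the Killing form is negative definite, $\rho Z_1=Z_2$, i.e. $\rho$ is (H2).

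For reducible $\mathcal{X}_1$ I would run the same computation over the irreducible factors: the cross terms $\langle\rho Z_1^{(a)},\rho Z_1^{(b)}\rangle$ vanish for $a\ne b$, the identity $\langle\rho Z_1,\rho Z_1\rangle=\langle Z_2,Z_2\rangle$ becomes $\sum_a c_a r_1^{(a)}=r_2$, which is again exactly tightness, and nothing else changes. I expect the main obstacle to be the tube type fact $dZ_{\mathbb{D}}=Z_{\mathcal{X}}$ — that the maximal polydisc is an (H2)-subalgebra precisely for tube type domains — together with the careful tracking of normalization constants in the curvature and Killing-form computations; granting those, the rest follows formally from Theorems \ref{subalg} and \ref{diagd}.
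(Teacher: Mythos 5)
The paper never proves Theorem \ref{tube}: it is imported verbatim from \cite{A8}, so there is no internal proof to compare against. Your argument is a legitimate self-contained derivation from the tools the paper does set up (the curvature computation in Theorem \ref{subalg}, the Domic--Toledo norm $r\pi$, and Theorem \ref{diagd}), and the structural input you isolate --- that $dZ_{\mathbb{D}}=Z_{\mathcal{X}}$ for a diagonal disc precisely when $\mathcal{X}$ is of tube type, because the restricted root system is $C_r$ rather than $BC_r$ --- is exactly the right one and is the same fact that drives the proof in \cite{A8}. For $\mathfrak{g}_1$ simple the chain ``tight $\iff c\,r_1=r_2\iff\langle\rho Z_1,\rho Z_1\rangle=\langle Z_2,Z_2\rangle$'', combined with $\langle\rho Z_1,Z_2\rangle=\langle Z_2,Z_2\rangle$ from Theorem \ref{diagd} and the negative definiteness of the Killing form on $\mathfrak{k}_2$, is correct and complete; the bookkeeping with $\ell_i=\langle\gamma^{(i)},\gamma^{(i)}\rangle$ and $\langle Z,Z\rangle=-r/\ell$ checks out (modulo the sign conventions already loose in the paper's own curvature computation, which cancel from the final quotients).

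The one genuine soft spot is the reducible case. You assert that the cross terms $\langle\rho Z_1^{(a)},\rho Z_1^{(b)}\rangle$ vanish for $a\neq b$, but commuting elements of $\mathfrak{k}_2$ are not in general Killing-orthogonal, so this needs an argument. It is true, but the justification goes through the structure theory: by Lemma \ref{lma3} one factors $\rho$ through a regular subalgebra $\mathfrak{g}''_1\oplus\dots\oplus\mathfrak{g}''_n$ with each restriction satisfying (H2), so that $\rho Z_1^{(a)}=Z''_a$; the components of a $\Pi$-system span mutually orthogonal subspaces of $\mathfrak{h}^*$, hence the Cartan parts of the $\mathfrak{g}''_a$, and in particular the $Z''_a$, are mutually Killing-orthogonal. (Alternatively one can sidestep the issue by reducing to the irreducible case via Lemmas \ref{lma1}--\ref{lma3}.) With that one sentence added, the proof is complete.
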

We now have the tools we need to determine if a representation corresponds to a tight map.

\section{regular subalgebras}
In this section we examine the regular subalgebras to see which are tight. To do this we will use Theorem \ref{subalg} and Corollary \ref{cor}. Since the criterion in Theorem \ref{subalg} is a quotient of norms we can normalize the Killing form as we please. We can thus pretty much read off the results directly from the Dynkin diagrams. To make calculations easy we normalize so that the length of the shortest roots are 1. We recall some basic facts about root systems that will make the calculations easier.
There are at most two lengths of roots for each simple Hermitian Lie algebra. In fact, for the Lie algebras we are concerned with, only $sp(2p)$ and $so(2l+1,2)$ contains roots of different lengths. With the normalization we have chosen the longer roots of these Lie algebras have length $\sqrt{2}$. Also, the highest root of a root system is always a longer root.

We divide this section into subsections, one for each simple Hermitian Lie algebra $\mathfrak{g}^{'}$. In each subsection $\{\alpha_i\}$ denotes the set of simple roots and $\gamma$ the highest root of $\mathfrak{g}^{'}$. We denote the subalgebras by $\mathfrak{g}$ and their decomposition into simple parts by $\mathfrak{g}=\mathfrak{g}_1\oplus...\oplus\mathfrak{g}_n$. For each $\mathfrak{g}_i$ we denote the highest root by $\gamma_i$. We begin each subsection by describing the Dynkin diagram and listing roots that will be needed to describe the subalgebras. We then list the systems of roots corresponding to the maximal regular subalgebras as classified by Ihara \cite{A6}. We then calculate which of them are tightly embedded in the original Lie algebra.
We list only the proper maximal regular subalgebras but we cover in fact all regular subalgebras. We do this by considering chains of proper maximal regular subalgebras. If $\mathfrak{g}$ is a non-maximal regular subalgebra of $\mathfrak{g}^{'}$, we can find a proper maximal regular subalgebra $\mathfrak{g}_1$ containing $\mathfrak{g}$. If $\mathfrak{g}$ is a proper maximal regular subalgebra of $\mathfrak{g}_1$ we are done, otherwise we repeat the process until we have a chain $\mathfrak{g}\subset\mathfrak{g}_k\subset ... \subset\mathfrak{g}_1\subset\mathfrak{g}^{'}$ of proper maximal regular subalgebras. We have that $\mathfrak{g}$ is a tight regular subalgebra of $\mathfrak{g}^{'}$ if all the inclusions in the chain are tight.

Our list looks a little different than Ihara's as we are only interested in the part of the subalgebras corresponding to a Hermitian symmetric spaces of non-compact type. 
To avoid overlappings and special cases of the Dynkin diagrams in lower dimensions  we restrict param eters as follows:\\
 $su(p,q)$, $1\leq p\leq q$, $so^*(2p)$, $p\geq 5$,
$sp(2p)$, $p\geq 2$, $so(p,2)$ $p\geq 5$\\
All cases are still covered due to wellknown isomorphisms, see for example \cite{A3}.\\

{\bf Maximal regular subalgebras $\mathfrak{g}$ of $\mathfrak{g}^{'}=su(p,q)$}\\

\begin{picture}(50,43)
\multiput(5,31)(30,0){7}{\circle{5}}
\multiputlist(20,31)(30,0)%
{{\line(10,0){25}},{$\cdots$},{\line(10,0){25}},{\line(10,0){25}},{$\cdots$},{\line(10,0){25}}}
\multiputlist(5,41)(30,0){$\alpha_{q+1}$,$\alpha_{q+2}$,$\alpha_{q+p-1}$,$\alpha_{1}$,$\alpha_{2}$,$\alpha_{q-1}$,$\alpha_{q}$}
\end{picture}
\begin{align}
&\gamma=\alpha_1+...+\alpha_{p+q-1}\label{gammat}
\end{align}
\begin{align}
&su(l,q) ,\,1\leq l <p,\label{su1}\\
& \{\alpha_{p+q-l},\,\alpha_{p+q-l+1},...,\alpha_{p+q-1},\alpha_1,\alpha_2,...,\alpha_q\}\nonumber\\
&su(p,s) ,\,p\leq s <q,  \label{su2}\\
&\{\alpha_{q+1},\,\alpha_{q+2},...,\alpha_{p+q-1},\alpha_1,\alpha_2,...,\alpha_s\}\nonumber\\
&su(s,p),\, 1\leq s<p, \, \label{su3}\\
&\{\alpha_{s},\alpha_{s-1},...,\alpha_{2},\alpha_1,\alpha_{p+q-1},\alpha_{p+q-2},...,\alpha_{q+1}\}\nonumber
\end{align}
\begin{align}
&su(l,s)+su(p-l,q-s),\, 1\leq l\leq s, \, p-l\leq q-s, \,   \label{su4}\\
&\begin{array}{l}  \{\alpha_{p+q-l},\alpha_{p+q-l+1},...,\alpha_{p+q-2},\alpha_1,\alpha_2,...,\alpha_s\}\cup\\
 \{-\alpha_{p+q-l-2},...,-\alpha_{q+1},\gamma,-\alpha_q,...,-\alpha_{s+2}\}
\end{array}\nonumber\\
&su(s,l)+su(p-l,q-s),\, 1\leq s<l <p, \,\label{su5}\\
&\begin{array}{l}  \{\alpha_{s},\alpha_{s-1},...,\alpha_1,\alpha_{p+q-1},...,\alpha_{p+q-l}\}\cup\\
 \{-\alpha_{p+q-l-2},...,-\alpha_{q+1},\gamma,-\alpha_q,...,-\alpha_{s+2}\}\nonumber
\end{array}
\end{align}

Since all roots in $su(p,q)$ are of the same length, the quotient $\frac{\langle\gamma,\gamma\rangle}{\langle\gamma_i,\gamma_i\rangle}$ in (\ref{kvoten}), Theorem \ref{subalg} is 1 in all cases. Comparing ranks we see that (\ref{su2}) and (\ref{su4}) are tight, the rest are not.\\

{\bf Maximal regular subalgebras $\mathfrak{g}$ of $\mathfrak{g}^{'}=so^*(2p)$}\\

\begin{picture}(50,43)
\multiput(5,31)(30,0){5}{\circle{5}}
\multiputlist(20,31)(30,0)%
{{\line(10,0){25}},{\line(10,0){25}},{$\cdots$},{\line(10,0){25}}}
\multiputlist(5,41)(30,0){$\alpha_{1}$,$\alpha_{2}$,$\alpha_{3}$,$\alpha_{p-2}$,$\alpha_{p-1}$}
\put(35,0){\circle{5}}
\put(34,4){\line(0,10){25}}
\put(20,0){$\alpha_p$}
\end{picture}
\begin{align}
&\gamma=\alpha_1+2(\alpha_2+...+\alpha_{p-2})+\alpha_{p-1}+\alpha_p \\
&\beta=\alpha_2+ \alpha_3+...+\alpha_p
\end{align}
\begin{align}
&su(l,p-l) ,\, 1\leq l\leq p/2\, ,\label{so1}\\
&\{-\alpha_{p-l+2},...,-\alpha_{p-2},-\alpha_{p-1},\beta,\alpha_1,...,\alpha_{p-l}\}\nonumber\\
&so^*(2l) + so^*(2(p-l)) ,\, [p/2]\leq l \leq p-2  \, ,  \label{so2}\\
&\Big\{\begin{array}{l}
\alpha_1,...,\alpha_{l-1}\\
\alpha_p
\end{array}\Big\}
\cup 
\Big\{\begin{array}{l}
\gamma,-\alpha_{p-2},-\alpha_{p-3}...,-\alpha_{l+1}\\
-\alpha_{p-1}
\end{array}\Big\}\nonumber\\
&so^*(2(p-1)), \,
\Big\{\begin{array}{l}
 \alpha_1,...,\alpha_{p-2}\\
\alpha_p
\end{array}\Big\}\label{so3}
\end{align}

Again all roots are of the same length. By comparing ranks we see that (\ref{so1}) is tight  precisely when $l=[\frac{p}{2}]$.\\
For (\ref{so2}) we see that the inclusion is tight for all $l$ if $p$ is odd, and for even $l$ if $p$ is even. For (\ref{so3}) the inclusion is tight for $p$ odd.\\

{\bf Maximal regular subalgebras $\mathfrak{g}$ of $\mathfrak{g}^{'}=sp(2p)$}\\

\begin{picture}(50,23)
\multiput(5,1)(30,0){5}{\circle{5}}
\multiputlist(50,1)(30,0)%
{{\line(10,0){25}},{$\cdots$},{\line(10,0){25}}}
\multiputlist(5,11)(30,0){$\alpha_{1}$,$\alpha_{2}$,$\alpha_{3}$,$\alpha_{p-1}$,$\alpha_{p}$}
\put(7,2){\line(10,0){25}}
\put(7,0){\line(10,0){25}}
\end{picture}

\begin{align}
&\gamma=\alpha_1+2\sum_2^{p}{\alpha_i} \\
&\beta=\sum_2^{p}{\alpha_i}
\end{align}
\begin{align}
&su(l,p-l) \:1\leq l\leq [p/2],\label{sp1}\\
& \{-\alpha_{p-l+3},...,-\alpha_{p-1},-\alpha_p, \beta,\alpha_1+\alpha_2,\alpha_3,\alpha_4,...,\alpha_{p-l+1}\}\nonumber\\
&sp(2l) + sp(2(p-l)) \:[p/2]\leq l \leq p-1   , \label{sp2}\\
&\{\alpha_{1},...,\alpha_{l}\}\cup\{\gamma,-\alpha_p,\alpha_{p-1},...,-\alpha_{l+2}\}\nonumber
\end{align}
Here there are two possible lengths for roots so we have to make some calculations.
For (\ref{sp1}) we use the formula (\ref{gammat}) for the highest root of $\mathfrak{g}$ of type $su(p,q)$. We get  $$\gamma_1=-\sum_{p-l+3}^{p}{\alpha_i}+\sum_2^{p}{\alpha_i}+\sum_1^{p-l+1}{\alpha_i}=\alpha_1+2\sum_2^{p-l+1}{\alpha_i}+\alpha_{p-l+2}$$ as the highest root.
We have
\begin{align*}
\langle \gamma_1,\gamma_1\rangle = ||\alpha_1||^2+||2\sum_2^{p-l+1}{\alpha_i}||^2+||\alpha_{p-l+2}||^2+2\langle \alpha_1,2\alpha_2\rangle \\ +2\langle 2\alpha_{p-l+1},\alpha_{p-l+2}\rangle   =2+4+1-4-2=1
\end{align*}
and  $\langle \gamma,\gamma\rangle =2$ since it is the highest root.
By Theorem \ref{subalg} the inclusion is tight if l=p/2.
Using Corollary \ref{cor} we see that (\ref{sp2}) is tight .\\

{\bf Maximal regular subalgebras $\mathfrak{g}$ of $\mathfrak{g}^{'}=so(p,2)$,  $p= 2k-2$ even} \\

\begin{picture}(50,37)
\multiput(5,31)(30,0){5}{\circle{5}}
\multiputlist(20,31)(30,0)%
{{\line(10,0){25}},{\line(10,0){25}},{$\cdots$},{\line(10,0){25}}}
\multiputlist(5,41)(30,0){$\alpha_{1}$,$\alpha_{2}$,$\alpha_{3}$,$\alpha_{k-2}$,$\alpha_{k-1}$}
\put(94,1){\circle{5}}
\put(94,4){\line(0,10){25}}
\put(100,0){$\alpha_k$}
\end{picture}
\begin{align}
&\gamma=\alpha_1+2(\alpha_2+...+\alpha_{k-2})+\alpha_{k-1}+\alpha_k\\
&\beta_1=\alpha_2+2(\alpha_3+...+\alpha_{k-2})+\alpha_{k-1}+\alpha_k\\
&\beta_2=\alpha_{k-2}+\alpha_{k-1}+\alpha_k
\end{align}
\begin{align}
&su(1,1)+ su(1,1),\, \{\alpha_1\}\cup\{\gamma\}\label{soj1}\\
&su(1,l),\, 2\leq l \leq k-1, \,\{\alpha_1,...,\alpha_l\}\label{soj2}\\
&su(1,k-1), \, \{\alpha_1,...,\alpha_{k-2},\alpha_k\}\label{soj3}\\
&su_(2,2),\, \{\beta_1,\alpha_1,\alpha_2\}\label{soj4}\\
&so(p-2,2),\, \Big\{\begin{array}{l}
\alpha_2,...,\alpha_{k-1}\\
\beta_2
\end{array}\Big\}\label{soj5}
\end{align}

Again we have that all roots have the same length. Comparing ranks, Theorem \ref{subalg} then tells us that (\ref{soj1}), (\ref{soj4}) and (\ref{soj5}) are tight inclusions, the rest are not.\\

\newpage
{\bf Maximal regular subalgebras $\mathfrak{g}$ of $\mathfrak{g}^{'}=so(p,2)$, $p=2k-1$ odd}\\

\begin{picture}(50,57)
\multiput(5,31)(30,0){5}{\circle{5}}
\multiputlist(20,31)(30,0)%
{{\line(10,0){25}},{$\cdots$},{\line(10,0){25}}}
\multiputlist(5,41)(30,0){$\alpha_{1}$,$\alpha_{2}$,,$\alpha_{k-1}$,$\alpha_{k}$}
\put(97,32){\line(10,0){25}}
\put(97,30){\line(10,0){25}}
\end{picture}
\begin{align}
&\gamma=\alpha_1+2(\alpha_2+...+\alpha_{k-2})+\alpha_{k-1}+\alpha_k\\
&\beta_1=\alpha_2+2(\alpha_3+...+\alpha_{k-1}+\alpha_k)\\
&\beta_2=\alpha_{k-1}+2\alpha_k\\
&\beta_3=\alpha_{k-1}+\alpha_k\\
&\beta_4=\alpha_1+\alpha_2+...+\alpha_{k-2}+\alpha_{k-1}+\alpha_k
\end{align}
\begin{align}
&su(1,1)+su(1,1),\, \{\alpha_1\}\cup\{\gamma\}\label{sou1}\\
&su(1,l),\, 2\leq l \leq k-2, \,\{\alpha_1,...,\alpha_l\}\label{sou2}\\
&su(2,2),\, \{\beta_1,\alpha_1,\alpha_2\}\label{sou3}\\
&su(1,1), \, \{\beta_4\}\label{sou4}\\
&so(p-1,2),\, \Big\{\begin{array}{l}
\alpha_2,...,\alpha_{k-1}\\
\beta_2
\end{array}\Big\}\label{sou5}\\
&so(p-2,2),\, \{\alpha_2,...,\alpha_{k-1},\beta_3\}\label{sou6}
\end{align}
Here (\ref{sou1}), (\ref{sou3}), (\ref{sou5}) and (\ref{sou6}) are tight by comparing ranks and applying Corollary \ref{cor}. Calculating the quotient for (\ref{sou4}) gives us $c_1=2$, the inclusion is thus tight by Theorem \ref{subalg}. For (\ref{sou2}) the quotient is one and the inclusion is not tight.

{\bf Maximal regular subalgebras $\mathfrak{g}$ of $\mathfrak{g}^{'}=e_{6(-14)}$}\\

\begin{picture}(50,37)
\multiput(5,31)(30,0){5}{\circle{5}}
\multiputlist(20,31)(30,0)%
{{\line(10,0){25}},{\line(10,0){25}},{\line(10,0){25}},{\line(10,0){25}}}
\multiputlist(5,41)(30,0){$\alpha_{1}$,$\alpha_{2}$,$\alpha_{3}$,$\alpha_{4}$,$\alpha_{5}$}
\put(65,1){\circle{5}}
\put(65,4){\line(0,10){25}}
\put(70,0){$\alpha_6$}
\end{picture}
\begin{align}
&\gamma=\alpha_1+2\alpha_2+3\alpha_3+2\alpha_4+\alpha_5+2\alpha_6\\
&\beta_1=\alpha_2+2\alpha_3+2\alpha_4+\alpha_5+\alpha_6\\
&\beta_2=\alpha_3+\alpha_4+\alpha_5+\alpha_6
\end{align}
\begin{align}
&su(1,5)+su(1,1), \, \{\alpha_1,...,\alpha_5\}\cup\{\gamma\}\\
&su(1,2)+su(1,2), \, \{\alpha_1,\alpha_2\}\cup\{\gamma,-\alpha_6\}\\
&su(2,4),\, \{\beta_1,\alpha_1,\alpha_2,\alpha_3,\alpha_6\}\\
&so^*(10),\, \Big\{\begin{array}{l}
\alpha_1,\alpha_2,\alpha_3,\alpha_4\\
\beta_2
\end{array}\Big\}\\
&so(8,2),\, \Big\{\begin{array}{l}
\alpha_1,\alpha_2,\alpha_3,\alpha_4\\
\alpha_6
\end{array}\Big\}
\end{align}
We see immeadiately that all the inclusions are tight by comparing ranks and applying Corollary \ref{cor}.

{\bf Maximal regular subalgebras $\mathfrak{g}$ of $\mathfrak{g}^{'}=e_{7(-25)}$}\\

\begin{picture}(50,37)
\multiput(5,31)(30,0){6}{\circle{5}}
\multiputlist(20,31)(30,0)%
{{\line(10,0){25}},{\line(10,0){25}},{\line(10,0){25}},{\line(10,0){25}},{\line(10,0){25}}}
\multiputlist(5,41)(30,0){$\alpha_{1}$,$\alpha_{2}$,$\alpha_{3}$,$\alpha_{4}$,$\alpha_{5}$,$\alpha_{6}$}
\put(65,1){\circle{5}}
\put(65,4){\line(0,10){25}}
\put(50,0){$\alpha_7$}
\end{picture}
\\
\\
\begin{align}
&\gamma=\alpha_1 + 2\alpha_2 + 3\alpha_3 + 4\alpha_4 + 3\alpha_5 + 2\alpha_6 + 2\alpha_7\\
&\beta_1=\alpha_2 + 2\alpha_3 +2 \alpha_5 + \alpha_6 + 2 \alpha_7\\
&\beta_2=\alpha_3 + 2\alpha_4 + 2\alpha_5 + \alpha_6 + \alpha_7\\
&\beta_3=\alpha_4 + \alpha_5 + \alpha_6 + \alpha_7  
\end{align}
\begin{align}
&su(1,5)+su(1,2), \, \{\alpha_1,...,\alpha_4,\alpha_7\}\cup\{\gamma,-\alpha_6\}\label{e1}\\
&su(1,3)+su(1,3), \, \{\alpha_1,\alpha_2,\alpha_3\}\cup\{\gamma,-\alpha_6\,-\alpha_5\}\label{e2}\\
&su(2,6), \, \{\beta_1,\alpha_1,\alpha_2,...,\alpha_6\}\label{e3}\\
&su(3,3), \, \{-\alpha_7,\beta_1,\alpha_1,\alpha_2,\alpha_3\}\label{e4}\\
&so^*(12),\, \label{e5}\Big\{\begin{array}{l}
\alpha_1,\alpha_2,\alpha_3,\alpha_4,\alpha_7\\
\beta_2
\end{array}\Big\}\\
&so(10,2)+su(1,1), \,
\Big\{\begin{array}{l}
\alpha_1,\alpha_2,\alpha_3,\alpha_4,\alpha_5\\
\alpha_7
\end{array}\Big\}
\cup\{\gamma\}\label{e6}\\
&e_{6(-14)},\, \Big\{\begin{array}{l}
\alpha_2,\alpha_3,\alpha_4,\alpha_5,\alpha_6\\
\beta_3\}
 \end{array}\Big\}\label{e7}
\end{align}

Again all the roots are of length 1. Comparing ranks and applying Theorem \ref{subalg} we have that (\ref{e4}), (\ref{e5}) and (\ref{e6}) are tight, the rest are not.

\section{Irreducible representations satisfying (H2)}

The remaining problem is to classify irreducible tight \linebreak (H2)-representations. Irreducible (H2)-representations $\rho\colon\mathfrak{g}\rightarrow\mathfrak{g}^{'}$ were classified by Satake and Ihara so we just have to check which of them are tight using Theorem \ref{diagd} and Corollary \ref{tube}. This will require some explicit calculations. We therefore begin by giving matrix descriptions of $su(p,q), sp(2p)$ and $so^*(2p)$ together with Cartan decompositions and elements defining a complex structure on the corresponding symmetric space. We also define two diagonal discs that will be needed in the calculations.\\

Let $V$ be a $(p+q)$-dimensional vector space over $\mathbb{C}$. $su(p,q)$ is defined as the traceless subalgebra of $End(V)$ preserving some sesquilinear form $F$ of signature $(p,q)$, i.e. $X\in su(p,q)$ if $F(Xv,w)+F(v,Xw)=0$ for all $v,w\in V$. Choosing an orthonormal basis $\{e_i\}$ for $F$ with $F(e_i,e_i)=1$ for $i\leq p$ we can represent $F$ with the matrix 
$\left(\begin{array}{cc}
1_p&0\\
0&-1_q
\end{array}\right) $, by $F(v,w)=v^* \left(\begin{array}{cc}
1_p&0\\
0&-1_q
\end{array}\right) w$. With respect to this basis we can identify $su(p,q)$ as the matrix algebra
\begin{eqnarray*}
\mathfrak{g}=\{ 
\left(\begin{array}{cc}
A&B\\
B^*&C
\end{array}\right) 
:A\in M_p(\mathbb{C}),B\in M_{p,q}(\mathbb{C}),C\in M_q(\mathbb{C}), A^*=-A, C^*=-C\}
\end{eqnarray*}
We choose the Cartan decomposition and complex structure 
\begin{eqnarray*}
\mathfrak{k}=\{ 
\left(\begin{array}{cc}
A&0\\
0&C
\end{array}\right) \}\, ,
\mathfrak{p}=\{ 
\left(\begin{array}{cc}
0&B\\
B^*&0
\end{array}\right) \}\, ,
Z_{p,q}=\frac{i}{p+q}\left(\begin{array}{cc}
q1_p&0\\
0&-p1_q
\end{array}\right) \\
\end{eqnarray*}

The algebra $so^*(2p)$ is defined using a symmetric $\mathbb{C}$-bilinear form and a skew-Hermitian form. Fixing a well chosen basis we can present $so^*(2p)$ as the traceless subalgebra of $End(\mathbb{C}^{2p})$ preserving the forms 
\begin{eqnarray}
F(v,w)&=&v^* 
\left(\begin{array}{cc}
i1_p&0\\
0&-i1_p
\end{array}\right) w
\mbox{ and}\label{soett}\\
Q(v,w)&=&v^t \left(\begin{array}{cc}
0&1_p\\
1_p&0
\end{array}\right) w.\label{sotva}
\end{eqnarray}
This gives us the following matrix description for $so^*(2p)$:
\begin{eqnarray*}
\mathfrak{g}=\{ 
\left(\begin{array}{cc}
A&B\\
B^*&\bar{A}
\end{array}\right) 
:A\in M_p(\mathbb{C}),B\in M_{p}(\mathbb{C}), A^*=-A, B^t=-B\}\\
\mathfrak{k}=\{ 
\left(\begin{array}{cc}
A&0\\
0&\bar{A}
\end{array}\right) \}\, ,
\mathfrak{p}=\{ 
\left(\begin{array}{cc}
0&B\\
B^*&0
\end{array}\right) \}\, ,
Z=\frac{i}{2}\left(\begin{array}{cc}
1_p&0\\
0&-1_p
\end{array}\right) 
\end{eqnarray*}

In a similar fashion $sp(2p)$ is defined as the algebra preserving a skewsymmetric bilinear form on $\mathbb{R}
^{2p}$. Conjugating this algebra in $End(\mathbb{C}^{2p})$ we arrive at the following description:

\begin{eqnarray*}
\mathfrak{g}=\{ 
\left(\begin{array}{cc}
A&B\\
B^*&\bar{A}
\end{array}\right) 
:A\in M_p(\mathbb{C}),B\in M_{p}(\mathbb{C}), A^*=-A, B^t=B\}\\
\mathfrak{k}=\{ 
\left(\begin{array}{cc}
A&0\\
0&\bar{A}
\end{array}\right) \}\, ,
\mathfrak{p}=\{ 
\left(\begin{array}{cc}
0&B\\
B^*&0
\end{array}\right) \}\, ,
Z=\frac{i}{2}\left(\begin{array}{cc}
1_p&0\\
0&-1_p
\end{array}\right) \\
\end{eqnarray*}

We also define the following two diagonal discs

\begin{align}
&d_{p,q}\colon su(1,1)\rightarrow su(p,q),\, p\geq q\\
&\left(\begin{array}{cc}
ai&z\\
\bar{z}&-ai
\end{array}\right)\mapsto
\left(\begin{array}{ccc}
0&0&0\\
0&ai1_q&z1_q\\
0&\bar{z}1_q&-ai1_q
\end{array}\right)\label{disk}
\end{align}
For $p=2l+1$  we define
\begin{align}
&d_{p}\colon su(1,1)\rightarrow so^*(2p)\\
&\left(\begin{array}{cc}
ai&z\\
\bar{z}&-ai
\end{array}\right)\mapsto
\left(\begin{array}{cccccc}
ai1_l&0&0&0&0&z1_l\\
0&0&0&0&0&0\\
0&0&ia1_l&-z1_l&0&0\\
0&0&-\bar{z}1_l&-ia1_l&0&0\\
0&0&0&0&0&0\\
\bar{z}1_l&0&0&0&0&-ai1_l
\end{array}\right).
\end{align}

From these descriptions we see that we have two inclusions
\begin{eqnarray}
\iota_1\colon sp(2p)\rightarrow su(p,p)\\
\iota_2\colon so^*(2p)\rightarrow su(p,p)
\end{eqnarray}
By Theorem \ref{tube} we have that $\iota_1$ is tight and that $\iota_2$ is tight for $p$ even. For $p=2l+1$ odd we calculate

\begin{align*}
&\iota_2 d_p Z_{1,1}-d_{p,p}Z_{1,1}=\\
&=\frac{i}{2}\left(\begin{array}{cccccc}
1_l&0&0&0&0&0\\
0&0&0&0&0&0\\
0&0&1_l&0&0&0\\
0&0&0&-1_l&0&0\\
0&0&0&0&0&0\\
0&0&0&0&0&-1_l
\end{array}\right)
-\frac{i}{2}
\left(\begin{array}{cccccc}
1_l&0&0&0&0&0\\
0&1&0&0&0&0\\
0&0&1_l&0&0&0\\
0&0&0&-1_l&0&0\\
0&0&0&0&-1&0\\
0&0&0&0&0&-1_l
\end{array}\right)\\
&=\frac{i}{2}
\left(\begin{array}{cccccc}
0&0&0&0&0&0\\
0&-1&0&0&0&0\\
0&0&0&0&0&0\\
0&0&0&0&0&0\\
0&0&0&0&1&0\\
0&0&0&0&0&0
\end{array}\right),
Z_{p,p}=\frac{i}{2}\left(\begin{array}{cccccc}
1_l&0&0&0&0&0\\
0&1&0&0&0&0\\
0&0&1_l&0&0&0\\
0&0&0&-1_l&0&0\\
0&0&0&0&-1&0\\
0&0&0&0&0&-1_l
\end{array}\right),
\end{align*}
$\langle \iota_2 d_p Z_{1,1}-d_{p,p}Z_{1,1}, Z_{p,p}\rangle=4p\mbox{Trace}( (\iota_2 d_p Z_{1,1}-d_{p,p}Z_{1,1}) Z_{p,p})=2p\neq 0$. 
Applying Theorem \ref{diagd} we see that $\iota_2$ is not tight for $p$ odd.
Besides these inclusions, identity homomorphisms and the representations corresponding to regular subalgebras we have two classes of irreducible (H2)-representations \cite{A6},\cite{A7}.\\
The first class we have are spin representations of $so(p,2)$ into $su(p^{'},p^{'})$. Here $p^{'}=2^{\frac{p}{2}-1}$ for $p$ even and $p^{'}=2^{\frac{p-1}{2}}$ for $p$ odd. For each even $p$ we have two spin representations, and for $p$ odd we have one.
Satake showed that the image of these representations is contained in a copy of $sp(2p^{'})$ if $p\equiv 1,2,3 (8)$ and in a copy of $so^*(2p^{'})$ if $ p \equiv 5,6,7 (8)$.
\begin{prop}
The spin representations 
\begin{align*}
&\rho\colon so(p,2)\rightarrow su(p^{'},p^{'}),\\
&\rho\colon so(p,2)\rightarrow sp(2p^{'}),\\
&\rho\colon so(p,2)\rightarrow so^*(2p^{'})
\end{align*}
are all tight.
\end{prop}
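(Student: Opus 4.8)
The plan is to reduce everything to a single computation with the diagonal disc $d_p$ (or $d_{p,q}$) and Theorem \ref{diagd}. First I would handle the case $\rho\colon so(p,2)\to su(p',p')$ directly. Both $so(p,2)$ and $su(p',p')$ are Hermitian Lie algebras, and the target $su(p',p')$ is of tube type; moreover $so(p,2)$ is of tube type as well, so by Theorem \ref{tube} it suffices to check that $\rho$ is an (H2)-homomorphism, i.e.\ that $\rho(Z_{so(p,2)})=Z_{p',p'}$. Since $\rho$ is already known to be an (H2)-representation by the Satake--Ihara classification (that is part of the hypothesis that these are the irreducible (H2)-representations), Theorem \ref{tube} immediately gives tightness of $\rho\colon so(p,2)\to su(p',p')$. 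This is the easy half.

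For the two ``refined'' targets $sp(2p')$ and $so^*(2p')$ I would use the factorization through $su(p',p')$ together with Lemma \ref{lma2}. Write $\rho\colon so(p,2)\to \mathfrak{g}''\hookrightarrow su(p',p')$ where $\mathfrak{g}''$ is the copy of $sp(2p')$ (for $p\equiv 1,2,3\ (8)$) or $so^*(2p')$ (for $p\equiv 5,6,7\ (8)$) produced by Satake, and $\iota_1$ resp.\ $\iota_2$ is the inclusion computed just above in the text. We have already shown $\iota_1$ is always tight and $\iota_2$ is tight exactly for $p'$ even. By Lemma \ref{lma2}, since $\iota_1,\iota_2$ are injective, $\iota\circ\rho$ is tight if and only if both $\iota$ and $\rho$ are tight; conversely, knowing the composite $\iota\circ\rho\colon so(p,2)\to su(p',p')$ is tight (by the first paragraph) and that $\iota$ is tight lets us conclude $\rho\colon so(p,2)\to\mathfrak{g}''$ is tight. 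So the only gap is the case where $\iota_2$ is \emph{not} known to be tight, namely $p'$ odd, which occurs precisely when $p\equiv 5\ (8)$ (there $p'=2^{(p-1)/2}$ is odd iff $(p-1)/2=0$, impossible in range, or $p=5$ giving $p'=4$ — so in fact one must recompute exactly when $p'$ is odd; for $p'=2^{p/2-1}$ or $2^{(p-1)/2}$ this is only $p=2$ or $p=3$). I would therefore first pin down for which small $p$ the exponent forces $p'$ odd, handle those finitely many cases by the direct argument below, and use the factorization for all the rest.

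For the residual cases (and as an alternative unified argument) I would apply Theorem \ref{diagd} directly to $\rho\colon so(p,2)\to\mathfrak{g}''$: pick the diagonal disc $d$ in $so(p,2)$ and the diagonal disc $d''$ in $\mathfrak{g}''$, and compute $\langle\rho d Z_{\mathbb{D}},Z''\rangle$ versus $\langle d'' Z_{\mathbb{D}},Z''\rangle$ using the explicit matrix models given above for $so^*(2p')$ and $sp(2p')$ and the spin representation's action on a maximal set of strongly orthogonal roots. Because $\rho$ satisfies (H2) and maps the diagonal disc of $so(p,2)$ to (a disc conjugate to) the diagonal disc of $\mathfrak{g}''$ — this is exactly where one uses that the spin representation sends the root $\mathrm{SU}(1,1)$-triple generating the Toledo-realizing disc to the analogous triple downstairs — the two Killing-form pairings agree and Theorem \ref{diagd} yields tightness.

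The main obstacle is the last step: verifying that the diagonal disc of $so(p,2)$ is carried by the spin representation onto a diagonal disc of $\mathfrak{g}''$ (equivalently, that $\rho Z_{\mathbb{D}}$ and $Z''$ pair the same way against $Z''$ as $d''Z_{\mathbb{D}}$ does). This is a concrete but slightly delicate linear-algebra fact about how the spin representation decomposes the defining space of $su(p',p')$ and how the signature $(p',p')$ splitting interacts with the weights of the spin module; one must track the highest-weight vectors carefully and use that all short roots of $so(p,2)$ have equal length so the Toledo invariant of the disc is computed by a single root. Once that bookkeeping is done, the Killing-form computation is routine (it reduces to a trace, as in the $\iota_2$ computation above), and Theorem \ref{diagd} closes the argument.
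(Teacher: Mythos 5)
Your argument is correct, and for the first map it coincides with the paper's: both $so(p,2)$ and $su(p',p')$ are of tube type and the spin representation satisfies (H2), so Theorem \ref{tube} gives tightness. For the other two maps you diverge: you deduce tightness of $\rho\colon so(p,2)\to sp(2p')$ and $\rho\colon so(p,2)\to so^*(2p')$ from tightness of the composite with $\iota_1$, $\iota_2$ via Lemma \ref{lma2}, whereas the paper simply observes that $sp(2p')$ and $so^*(2p')$ are themselves tube-type targets (because $p'$ is a power of $2$, hence even in the admissible range $p\geq 5$, and $so^*(2n)$ is of tube type exactly for $n$ even) and applies Theorem \ref{tube} once more to each restricted (H2)-representation. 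Your route has the small merit of not needing to check that the restricted maps satisfy (H2) or that $so^*(2p')$ is of tube type; note also that Lemma \ref{lma2} gives you $\rho$ tight directly from the composite being tight, without separately invoking tightness of $\iota_1$ or $\iota_2$. Your worry about the residual case ``$p'$ odd'' is moot for exactly the reason you identify ($p'$ odd forces $p\leq 3$, outside the range where these spin representations occur), so the hand-wavy direct computation in your last two paragraphs is never actually needed; had it been, it would require substantially more care than you give it, since identifying the image of the diagonal disc under the spin representation is precisely the kind of bookkeeping the tube-type criterion lets the paper avoid.
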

\begin{proof}
 Since all these representations are between tube type domains and satisfy (H2) they are tight by Theorem \ref{tube}.
\end{proof}
The second class of irreducible (H2)-representations are the skewsymmetric tensor representations of $su(p,1)$. Since the symmetric space corresponding to $su(p,1)$ is not of tube type we will have to resort to Theorem \ref{diagd} and do some calculations. We begin by describing these representations in detail.

As mentioned $su(p,1)$ acts on $(V,F)$  where $V$ is a vector space over $\mathbb{C}$ of dim $p+1$ and $F$ is a Hermitian form of signature $(p,1)$. We can extend this action to $\bigwedge^m V$, the exterior product of $m$ copies of $V$. We extend $F$ to a Hermitian form $F^m$ on $\bigwedge^m V$ defined as $F^m(x_1\wedge...\wedge x_m,y_1\wedge...\wedge y_m,):=det(F(x_i,y_j))$. 
If we choose an orthonormal basis $\{e_1,..,e_{p+1}\}$ for $(V,F)$,  $\{e_{i_1}\wedge ...\wedge e_{i_m}, i_1<...<i_m\}$ is an orthonormal basis for $(\bigwedge^m V,F^m)$. To shorten notation we write $e_I= e_{i_1}\wedge ...\wedge e_{i_m}$ for an ordered set $I=\{i_1,...,i_m\}$. We get 
\begin{equation}
F^m(e_I,e_I)=
\begin{cases}
1 \, , \mbox{ if } i_m\leq p\\
-1\, , \mbox{ if } i_m=p+1. 
\end{cases}
\end{equation}
Thus $F^m$ has signature $(\binom{p}{m},\binom{p}{m-1})=:(p^{'},q^{'})$. The extension of the action thus defines a representation 
\begin{equation}
\rho_m\colon su(p,1)\rightarrow su(p^{'},q^{'}).
\end{equation}

\begin{prop}
The skewsymmetric tensor representations $\rho_m \colon su(p,1)\rightarrow su(p^{'},q^{'})$ are not tight except for $m=1,p.$
\end{prop}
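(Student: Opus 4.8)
The plan is to use Theorem \ref{diagd}, which for a holomorphic totally geodesic embedding $\rho$ reduces tightness to the single scalar equality $\langle \rho d Z_{\mathbb{D}}, Z'\rangle = \langle d' Z_{\mathbb{D}}, Z'\rangle$, where $d$, $d'$ are diagonal discs in the source and target and $Z'$ is the complex structure element of the target. For the source $su(p,1)$ the rank is $1$, so a diagonal disc is just a ``disc'' realizing the minimal holomorphic sectional curvature; concretely one may take $d = d_{p,1}$ as in (\ref{disk}) (mapping into the last two coordinates), so that $d Z_{1,1}$ is the element with a single $\tfrac{i}{2}$, a single $-\tfrac{i}{2}$, and zeros elsewhere. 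For the target $su(p',q')$ I would use $d' = d_{p',q'}$ from (\ref{disk}), and $Z' = Z_{p',q'}$. The left-hand side then becomes $\langle \rho_m(d Z_{1,1}), Z_{p',q'}\rangle$, a multiple of the trace of the product of the diagonal matrix $\rho_m(dZ_{1,1})$ with $Z_{p',q'}$; the right-hand side is the analogous trace with $\rho_m(dZ_{1,1})$ replaced by $d_{p',q'}Z_{1,1}$, which is $\tfrac{i}{2}$ times the matrix with $+1$ on the first $q'$ ``positive'' slots among... — more simply, $\langle d' Z_{1,1}, Z'\rangle$ is a universal constant depending only on the ranks, and will turn out to equal the value of the left-hand side precisely when the disc $d$ maps to a diagonal disc of $su(p',q')$, i.e. when $\rho_m$ preserves rank, which happens exactly for $m=1$ and $m=p$.

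The key computational step is to diagonalize $\rho_m(dZ_{1,1})$ explicitly. Using the orthonormal basis $\{e_I\}$ of $\bigwedge^m V$ described in the text, and taking $d = d_{p,1}$ so that $dZ_{1,1}$ acts on $V$ with eigenvalue $0$ on $e_1,\dots,e_{p-1}$, eigenvalue $\tfrac{i}{2}$ on a combination of $e_p, e_{p+1}$ and $-\tfrac{i}{2}$ on the orthogonal combination — actually it is cleaner to conjugate so that $dZ_{1,1}=\mathrm{diag}(0,\dots,0,\tfrac{i}{2},-\tfrac{i}{2})$ on $V$ after an obvious change of orthonormal basis diagonalizing it; then on $\bigwedge^m V$ the induced action is diagonal in the $e_I$ basis with eigenvalue $\tfrac{i}{2}[\,p\in I\,] - \tfrac{i}{2}[\,p+1\in I\,]$. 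I would then compute both traces by counting multi-indices: the number of $I$ with $i_m \le p$ (positive slots) versus $i_m = p+1$, refined by whether $p$ and/or $p+1$ lie in $I$. This reduces everything to binomial identities. The outcome is that $\langle \rho_m(dZ_{1,1}), Z_{p',q'}\rangle$ equals, up to the common normalization factor, some explicit rational expression in $p$ and $m$, while $\langle d_{p',q'}Z_{1,1}, Z_{p',q'}\rangle$ equals the ``rank one'' value $\tfrac{p'+q'}{\,\cdot\,}$-type constant; comparing the two yields a polynomial equation in $m$ whose only roots in $\{1,\dots,p\}$ are $m=1$ and $m=p$ (which is geometrically transparent since for $m=1$ the representation is the identity and for $m=p$ it is dual to a rank-one map, and in both cases the image symmetric space still has rank $1$).

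I expect the main obstacle to be bookkeeping rather than conceptual: carefully tracking the Harish-Chandra normalization constants and the factor relating $\langle\cdot,\cdot\rangle$ (the Killing form of the target) to the matrix trace form, so that the two sides of Theorem \ref{diagd} are compared on the same footing, and correctly identifying which multi-indices $I$ contribute with which sign. A secondary subtlety is making sure the chosen disc $d$ in $su(p,1)$ really is (equivalent to) a diagonal disc in the sense of Theorem \ref{diagd} — but since $su(p,1)$ has rank $1$, any disc realizing minimal holomorphic sectional curvature is a diagonal disc, so $d=d_{p,1}$ works and we may use the explicit form (\ref{disk}). Once the eigenvalue multiplicities on $\bigwedge^m V$ are written down, the remaining step is elementary: evaluate the two traces, set them equal, simplify using $\binom{p}{m}+\binom{p}{m-1}=\binom{p+1}{m}$ and similar identities, and observe that the resulting equation fails for $2\le m\le p-1$ while holding for $m=1,p$. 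This establishes the proposition.
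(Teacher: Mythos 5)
Your proposal is correct and follows essentially the same route as the paper: apply Theorem \ref{diagd} with the explicit diagonal discs $d_{p,1}$ and $d_{p',q'}$, note that $d_{p,1}Z_{1,1}$ is already diagonal on $V$ so that $\rho_m(d_{p,1}Z_{1,1})$ acts on each $e_I$ with eigenvalue $\tfrac{i}{2}[\,p\in I\,]-\tfrac{i}{2}[\,p+1\in I\,]$, and then compare the two sides by counting multi-indices; the paper carries this out and lands on $\langle\rho_m d_{p,1}Z_{1,1}-d_{p',q'}Z_{1,1},Z_{p',q'}\rangle=-\bigl(\binom{p+1}{m}+1\bigr)\bigl(\binom{p-1}{m-1}-\binom{p}{m-1}\bigr)$, nonzero except for $m=1,p$. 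Your observation that no conjugation is even needed (the disc is diagonal as written) and that any disc in the rank-one space $su(p,1)$ is a diagonal disc are both consistent with what the paper does.
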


 \begin{proof}
To see if this is tight we have to calculate $\langle\rho_m d_{p,1}Z_{1,1}-d_{p^{'},q^{'}}Z_{1,1},Z_{p^{'},q^{'}}\rangle $ and apply Theorem \ref{diagd}. By \ref{disk} we have 
$$d_{p,1}Z_{1,1}=
\frac{i}{2}\left(\begin{array}{ccc}
0&0&0\\
0&1&0\\
0&0&-1
\end{array}\right)$$ where the zeroes denote zero matrices of appropriate size. Let us see how this matrix acts on the basis elements of $\bigwedge^m V$.\\
\begin{eqnarray*}
\rho_m d_{p,1} Z_{1,1}(e_I)&=&( d_{p,1} Z_{1,1}(e_{i_1})\wedge...\wedge e_{i_m})+...+(e_{i_1}\wedge...\wedge  d_{p,1} Z_{1,1}(e_{i_m}))\\
&=&\begin{cases}
\frac{i}{2}e_I \, , \mbox{ if } i_m=p\\
-\frac{i}{2}e_I\, , \mbox{ if } i_m=p+1 \mbox{ and } i_{m-1}<p\\
0 \mbox{ otherwise}
\end{cases}
\end{eqnarray*}\\
We decompose $\bigwedge^m V$ as $\bigwedge^m V=W^+\oplus W^-=W^+_0\oplus W^+_1\oplus W^-_0\oplus W^-_1$.
Here $W^\pm$ denotes the positive respectively the negative part of $\bigwedge^m V$ with respect to $F^m$ and the subscript $0$ and $1$ denotes the kernel of $\rho_m d_{p,1}Z_{1,1}$  and its complement. With respect to this decomposition we can write in matrix form\\
$\rho_m d_{p,1}Z_{1,1}=
\frac{i}{2}\left(\begin{array}{cccc}
0&0&0&0\\
0&1_{\binom{p-1}{m-1}}&0&0\\
0&0&0&0 \\
0&0&0&-1_{\binom{p-1}{m-1}}
\end{array}\right)$

and, assuming $p^{'}\geq q^{'}$, \\$d_{p^{'},q^{'}}Z_{1,1}=
\frac{i}{2}\left(\begin{array}{ccc}
0&0&0\\
0&1_{q^{'}}&0\\
0&0&-1_{q^{'}}
\end{array}\right).$

We thus get  $\langle \rho_m d_{p,1}Z_{1,1}-d_{p^{'},q^{'}}Z_{1,1},Z_{p^{'},q^{'}}\rangle =-(\binom{p+1}{m}+1)(\binom{p-1}{m-1}-\binom{p}{m-1})$. This is non-zero for all $m$  except $m=1,p$. The case $p^{'}<q^{'}$ has the same result via a slightly different calculation.

\end{proof}
If we choose $m=\frac{p+1}{2}$ for $p$ odd, we have $p^{'}=\binom{p}{\frac{p+1}{2}}=\binom{p}{\frac{p-1}{2}}=q^{'}$. Satake showed that in this case the image is contained in a copy of $sp(2p^{'})$ if $p\equiv 1 \, (4)$ and in a copy of  $so^*(2p^{'})$ if $p\equiv 3 \, (4)$ \cite{A7}.

\begin{prop}
For $p\equiv 1 \, (4)$ the skewsymmetric tensor representations  $\rho_m \colon su(p,1)\rightarrow sp(2p^{'})$ are not tight.
For $p\equiv 3 \, (4)$ the skewsymmetric tensor representations  $\rho_m \colon su(p,1)\rightarrow so^*(2p^{'})$ are not tight except for $p=3$.
\end{prop}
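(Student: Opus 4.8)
The plan is to reduce to the non-tightness of $\rho_m\colon su(p,1)\to su(p',q')$ from the previous proposition, and to settle the one remaining case by a direct application of Theorem \ref{diagd} modelled on its proof. Throughout, $m=\tfrac{p+1}{2}$, so $p'=q'=\binom{p}{m}$, and for $p\geq 3$ we have $1<m<p$; hence the previous proposition already gives that $\rho_m\colon su(p,1)\to su(p',p')$ is \emph{not} tight. Up to equivalence we may take the copy of $sp(2p')$, resp.\ $so^*(2p')$, containing $\rho_m(su(p,1))$ to be the standard one, so that postcomposing with the inclusion $\iota_1$, resp.\ $\iota_2$, returns exactly the map $\rho_m\colon su(p,1)\to su(p',p')$.

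First I would dispose of the tube type cases. Since $sp(2p')$ is of tube type, $\iota_1\colon sp(2p')\to su(p',p')$ is tight; as $\iota_1\circ\rho_m$ is not tight while $\iota_1$ is, Lemma \ref{lma2} forces $\rho_m\colon su(p,1)\to sp(2p')$ to be non-tight, which handles every $p\equiv 1\ (4)$. If $p\equiv 3\ (4)$ and $p'$ is even, then $so^*(2p')$ is of tube type and $\iota_2\colon so^*(2p')\to su(p',p')$ is tight by Theorem \ref{tube}, so the same argument gives that $\rho_m\colon su(p,1)\to so^*(2p')$ is not tight.

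The remaining case is $\rho_m\colon su(p,1)\to so^*(2p')$ with $p'=2l'+1$ odd, which in particular includes $p=3$; here $\iota_2$ itself is not tight and the reduction above is silent. I would instead apply Theorem \ref{diagd} directly, using the diagonal disc $d_{p'}\colon su(1,1)\to so^*(2p')$ defined above and the element $Z$ giving the complex structure of $so^*(2p')$. The element $\rho_m d_{p,1}Z_{1,1}$ lies in $so^*(2p')$, and under the matrix models the inclusion $so^*(2p')\subset su(p',p')$ carries $Z$ to $Z_{p',p'}$; reusing the decomposition $\bigwedge^m V=W^+_0\oplus W^+_1\oplus W^-_0\oplus W^-_1$ from the previous proof, a computation with the Killing form of $so^*(2p')$ yields $\langle\rho_m d_{p,1}Z_{1,1},Z\rangle=-(p'-1)\binom{p-1}{m-1}$ and, from the explicit matrix for $d_{p'}$, $\langle d_{p'}Z_{1,1},Z\rangle=-(p'-1)^2$. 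Using $m-1=(p-1)/2$ and $p'=\binom{p-1}{(p+1)/2}+\binom{p-1}{(p-1)/2}$, this gives
\begin{equation*}
\langle\rho_m d_{p,1}Z_{1,1}-d_{p'}Z_{1,1},Z\rangle=(p'-1)\Bigl(\binom{p-1}{(p+1)/2}-1\Bigr),
\end{equation*}
so by Theorem \ref{diagd} the map is tight exactly when $\binom{p-1}{(p+1)/2}=1$, i.e.\ exactly when $p=3$. Hence $\rho_m$ is tight for $p=3$ and non-tight for all other $p\equiv 3\ (4)$, as claimed. (For $p=3$ one may instead simply observe that $\dim so^*(6)=15=\dim su(3,1)$, so $\rho_2$ is then an isomorphism.)

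The main obstacle is this last computation: correctly matching the four pieces $W^\pm_0,W^\pm_1$ with the $2p'\times 2p'$ matrix realisation of $so^*(2p')$, and keeping straight the (unnormalised) Killing form of $so(2p',\mathbb{C})$ as opposed to that of $su(2p',\mathbb{C})$ — the two agree up to a scalar on the overlap, which is what makes the reduction via $\iota_2$ work in the even case but not here. Everything else is a formal consequence of Lemma \ref{lma2}, Theorem \ref{tube} and Theorem \ref{diagd}.
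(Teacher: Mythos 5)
Your proposal is correct and follows essentially the same route as the paper: the tube-type cases ($sp(2p')$, and $so^*(2p')$ with $p'$ even) are dispatched via Lemma \ref{lma2} and the tightness of $\iota_1,\iota_2$, while the case $p'$ odd is settled by the same direct computation with Theorem \ref{diagd} using the discs $d_{p,1}$ and $d_{p'}$. Your final pairing $(p'-1)\bigl(\binom{p-1}{(p+1)/2}-1\bigr)$ agrees with the paper's $(2p'-2)\bigl(\binom{p-1}{m-1}-p'+1\bigr)$ up to a harmless factor of $-2$, and both vanish exactly for $p=3$.
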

\begin{proof}
Since $sp(2p^{'})$ is tightly embedded in $su(p^{'},p^{'})$ the restriction of $\rho_m$ to $\rho_m\colon su(p,1)\rightarrow sp(2p^{'})$ can not be tight since a non-tight embedding can not be factored as two tight ones by Lemma \ref{lma2}. The same argument is valid for the representations into $so^*(2p^{'})$ when $p^{'}$ is even. 

When $p^{'}$ is odd however, $so^*(2p^{'})$ is not tightly embedded in $su(p^{'},p^{'})$. Thus we have to examine this case further.
We begin by showing how the image of $su(p,1)$ lies in $so^*(2p^{'})$. We do this by defining a bilinear form $B$ on $\bigwedge^m V$ by  $x\wedge y=B(x,y)e_1\wedge...\wedge e_{p+1}$. For $g\in GL(V)$ we have $B(\rho_m(g)x,\rho_m(g)y)=det(g)B(x,y)$ so clearly $B$ is invariant under $\rho_m(su(p,1))$. We now choose a basis for $\bigwedge^m V$ as follows. Let $k=\binom{p}{m}$, and choose some ordering $I_1,...,I_k$ of the $m$-subsets of $\{1,...,p\}$. Denote by $I_i^c$ the complement of $I_i$ in $\{1,...,p+1\}$. We then choose the ordered basis $e_{I_1},...,e_{I_k},\sigma_1 e_{I_1^c},...,\sigma_k e_{I_k^c}$ for $\bigwedge^m V$. The $\sigma_i$ chosen as $+1$ or $-1$ so that $B(e_{I_i},\sigma_i e_{I_i^c})$=1. With respect to this basis we can represent $F^m$ as the matrix 
$\left(\begin{array}{cc}
I&0\\
0&-I
\end{array}\right) $ and $B$ as the matrix 
$\left(\begin{array}{cc}
0&I\\
I&0
\end{array}\right) $.
These are the matrices in (\ref{soett}), (\ref{sotva}) defining our choice of matrix description of $so^*(2p^{'})$ . If we further assume we ordered the $I_i$:s so that the last $\binom{p-1}{m-1}$ indices correspond to subsets such that $i_m=p$, we have 
$\rho_m d_{p,1}Z_{1,1}=
\frac{i}{2}\left(\begin{array}{cccc}
0&0&0&0\\
0&1_{\binom{p-1}{m-1}}&0&0\\
0&0&0&0 \\
0&0&0&-1_{\binom{p-1}{m-1}}
\end{array}\right)$ and

$
d_{p^{'}}Z_{1,1}=
\frac{i}{2}\left(\begin{array}{cccccc}
1_l&0&0&0&0&0\\
0&0&0&0&0&0\\
0&0&1_l&0&0&0\\
0&0&0&-1_l&0&0\\
0&0&0&0&0&0\\
0&0&0&0&0&-1_l
\end{array}\right)
$ where $l=[\frac{p^{'}}{2}]$.\\
Now $\langle \rho_m d_{p,1}Z_{1,1}-d_{p^{'}}Z_{1,1},J\rangle =(2p^{'}-2)(\binom{p-1}{m-1}-p^{'}+1)\neq 0 $ except for $p=3$. Hence we have a tight representation of $su(3,1)$ into $so^*(6)$ but for no other values of $p$. This is however just one of the special isomorphisms between the simple Lie algebras in lower dimension.
\end{proof}

\section*{Acknowledgments}

I would like to thank my advisor Genkai Zhang for valuable discussions
in the preparation of this paper.

\end{document}